\allowdisplaybreaks \numberwithin{equation}{section}
\numberwithin{equation}{section}
\newtheorem{theorem}{Theorem}[section]
\newtheorem{proposition}[theorem]{Proposition}
\newtheorem{lemma}[theorem]{Lemma}
\theoremstyle{definition}
\newtheorem{definition}[theorem]{Definition}
\theoremstyle{remark}
\newtheorem{remark}[theorem]{Remark}
\begin{document}

\title
{Steady vortex patches near a nontrivial irrotational flow}

 \author{Daomin Cao, Guodong Wang, Weicheng Zhan}

\address{Institute of Applied Mathematics, Chinese Academy of Science, Beijing 100190, and University of Chinese Academy of Sciences, Beijing 100049,  P.R. China}
\email{dmcao@amt.ac.cn}
\address{Institute of Applied Mathematics, Chinese Academy of Science, Beijing 100190, and University of Chinese Academy of Sciences, Beijing 100049,  P.R. China}
\email{wangguodong14@mails.ucas.ac.cn}
\address{Institute of Applied Mathematics, Chinese Academy of Science, Beijing 100190, and University of Chinese Academy of Sciences, Beijing 100049,  P.R. China}
\email{zhanweicheng16@mails.ucas.ac.cn}


\begin{abstract}
In this paper, we study the vortex patch problem in an ideal fluid in a planar bounded domain. By solving a certain minimization problem and studying the limiting behavior of the minimizer, we prove that for any harmonic function $q$ corresponding to a nontrivial irrotational flow, there exists a family of steady vortex patches approaching the set of extremum points of $q$ on the boundary of the domain. Furthermore, we show that each finite collection of strict extreme points of $q$ corresponds to a family of steady multiple vortex patches approaching it.
\end{abstract}

\maketitle

\section{Introduction}
In this paper, we study the incompressible steady flow in a planar bounded domain. The motion is governed by the following Euler equations

\begin{equation}\label{1-1}
\begin{cases}

(\mathbf{v}\cdot\nabla)\mathbf{v}=-\nabla P,\\
 \nabla\cdot\mathbf{v}=0,
\end{cases}
\end{equation}
where $\mathbf{v}=(v_1,v_2)$ is the velocity field and $P$ is the scalar pressure. The vorticity of the flow is defined by \[\omega=curl\mathbf{v}:=\partial_1v_2-\partial_2v_1.\]
If $\omega\equiv0$, then the flow is said to be irrotational.

Irrotational flows can be completely classified. Let $\mathbf{v}$ be a solution of \eqref{1-1}, then by the divergence-free condition $\nabla\cdot \mathbf{v}=0$ and Green's theorem, we have
\[\mathbf{v}=\nabla^\perp \psi:=(\partial_2\psi,-\partial_1\psi)\]
 for some function $\psi$, which we call the stream function. It is easy to check that
 \[-\Delta\psi=\omega.\]
 So the flow is irrotational if and only if $\psi$ is harmonic. Conversely, it is easy to prove that for any harmonic function $\psi$, there exists an irrotational flow with $\psi$ as its stream function. In this sense, an irrotational flow is equivalent to a harmonic function.

In this paper, we are concerned with steady flows with nonvanishing vorticity. More precisely, we prove that for any given nontrivial irrotational flow(the velocity field is not zero), there exists a family of steady vortex patch solutions near this flow. Here by vortex patch we mean that the vorticity $\omega$ has the form $\omega=\kappa I_A$, where $\kappa$ is a real number representing the vorticity strength, $A$ is a measurable set, and $I_A$ denotes the characteristic function of $A$, namely, $I_A=1$ in $A$ and $I_A=0$ elsewhere.

Vortex patches are a special class of non-smooth solutions of the two-dimensional Euler equations appropriate for modeling an isolated region of constant vorticity. In the past few decades, the extensive study of vortex patches has led to many interesting and significant results. In this paper, we focus on the construction of steady vortex patches. There exist a great literatures dealing with this problem; see for example \cite{CPY,CW,CW2,HM,T,W} and the references listed therein. An efficient method to study the vortex patch problem is the vorticity method. It was first established by Arnold and Khesin \cite{A,AK} and later developed by many authors \cite{Ba,B,B2,EM,FB,T}. Roughly speaking, the vorticity method asserts that a steady flow is in fact a constrained critical point of the kinetic energy, and the stability of the flow is equivalent to the nondegeneracy of that critical point. By maximizing the kinetic energy in a weakly closed subset in $L^\infty$ that contains the class of isovortical patches and studying the limiting behavior, Turkington \cite{T} constructed a family of steady vortex patches concentrating at a global minimum point of the Robin function of the domain. Later Burton \cite{B,B2} considered the maximization of the kinetic energy on general rearrangement classes, and by which he found more dynamically possible equilibria of planar vortex flows. Another method to study the vortex patch problem is developed by Cao et al. \cite{CPY}. By using a reduction argument for the stream function, they obtained steady multiple vortex patches near any given nondegenerate critical point of the Kirchhoff-Routh function. They also proved the uniqueness in \cite{CGPY} under certain assumptions.

Most of the previous results in \cite{CPY,HM,T,W} were concerned with the desingularization of point vortices. According to the vortex model, the evolution of a finite number of concentrated vortices in two dimensions is described by a dynamical system involving the Kirchhoff-Routh function; See \cite{L} for a general discussion. A natural question is the connection between the vortex model and the Euler equations. More specifically, for any critical point of the Kirchhoff-Routh function, can we construct a family of steady solutions of the Euler equations concentrating near that point? The answer is yes in some situations, especially when that critical point is nondegenerate; See \cite{CPY}. When one deals with desingularization of point vortices, the vorticity converges to a Dirac measure, in which case the vorticity amount is usually fixed and the vorticity strength goes to infinity.
As a contrast, our results in this paper are essentially of perturbation type. By using an adaption of the method in \cite{T}, we consider a certain variational problem in which the vorticity strength is fixed and the vorticity amount goes to zero. In this situation, the limiting behavior is mostly determined by the background irrotational flow, rather than the Kirchhoff-Routh function. Finally by deriving asymptotic estimate for the Lagrange multiplier, we are able to show that the support of vorticity shrinks to the boundary of the domain.

 It is also worth mentioning that in \cite{LYY} the authors obtained a similar existence result for vorticity without jump by considering a semilinear elliptic equation satisfied by the corresponding stream function. For the three dimensional case, steady flows with nonvanishing vorticity near an irrotational flow can also be constructed in some special cases; See \cite{Al,TX} for instance.

This paper is organized as follows. We first give a description of our problem and state the main results in Section 2. Then in Section 3 and 4 we solve a minimization problem and study the limiting behavior respectively to prove the main results. In Section 5 we briefly discuss the maximization case and obtain two similar results of existence.

\section{Main results}

Let $D\subset\mathbb{R}^2$ be a bounded and simply connected domain with a smooth boundary, $\partial D$.
Let $q$ be a harmonic function in $D$ corresponding to an irrotational flow $(\mathbf{v}_0,P_0)$ with $\mathbf{v}_0=\nabla^\perp q:=(\partial_2q,-\partial_1q)$. Then we have
\begin{equation}\label{1-2}
\begin{cases}
(\mathbf{v}_0\cdot\nabla)\mathbf{v}_0=-\nabla P_0 &\text{in }D,\\
 \nabla\cdot\mathbf{v}_0=0 &\text{in }D,\\
 \mathbf{v}_0\cdot\mathbf{n}= -\frac{\partial q}{\partial\mathbf{\nu}} &\text{on }\partial D,
\end{cases}
\end{equation}
where $\mathbf{n}=(n_1,n_2)$ is the exterior unit normal to the boundary $\partial D$, and $\mathbf{\nu}=\mathbf{n}^\perp:=(n_2,-n_1)$ denotes clockwise rotation through $\frac{\pi}{2}$ of $\mathbf{n}$.

To find a solution with nonvanishing vorticity near $\mathbf{v}_0$, we consider the following Euler equations with the same boundary condition as \eqref{1-2}
\begin{equation}\label{1-3}
\begin{cases}

(\mathbf{v}\cdot\nabla)\mathbf{v}=-\nabla P &\text{in }D,\\
 \nabla\cdot\mathbf{v}=0 &\text{in }D,\\
 \mathbf{v}\cdot\mathbf{n}= -\frac{\partial q}{\partial\mathbf{\nu}} &\text{on }\partial D.
\end{cases}
\end{equation}

Now we simplify \eqref{1-3} by using its vorticity formulation. Set $\omega=curl\mathbf{v}$. Taking the curl in the first equation in \eqref{1-3} we get
\begin{equation}\label{1-4}
   \nabla\cdot(\omega\mathbf{v})=0.
\end{equation}
On the other hand, we can recover $\mathbf{v}$ from $\omega$ in the following way. Since
\begin{equation}\label{1-5}
\begin{cases}
\nabla\cdot(\mathbf{v}-\mathbf{v}_0)=0 &\text{in } D,\\
(\mathbf{v}-\mathbf{v}_0)\cdot\mathbf{n}=0 &\text{on } \partial D,
\end{cases}
\end{equation}
we obtain $\mathbf{v}-\mathbf{v}_0=\nabla^\perp\psi$ for some function $\psi$ with $\psi=$constant on $\partial D$. Since $D$ is simply connected, without loss of generality, we assume that $\psi=0$ on $\partial D$ by adding a suitable constant. Therefore $\psi$ can be uniquely determined by $\omega$
\begin{equation}\label{1-6}
   \begin{cases}
-\Delta\psi=\omega &\text{in }D,\\
\psi=0 &\text{on } \partial D.
\end{cases}
\end{equation}
Set $G(\cdot,\cdot)$ to be the Green's function for $-\Delta$ in $D$ with zero Dirichlet data on $\partial D$. Then $\psi$ can be expressed in terms of the Green's operator as follows
\begin{equation}\label{1-8}
   \psi(x)=G\omega(x):=\int_DG(x,y)\omega(y)dxdy.
\end{equation}

In other words, in order to solve \eqref{1-3}, it suffices to consider the following equation satisfied by $\omega$
\begin{equation}\label{1-7}
   \nabla\cdot(\omega\nabla^\perp(G\omega+q))=0.
\end{equation}

Since we are going to deal vortex patches which are discontinuous, it is necessary to interpret $\eqref{1-7}$ in the weak sense, namely, we need to introduce the notation of weak solution to \eqref{1-7}.

\begin{definition}\label{1-9}
We call $\omega\in L^\infty(D)$ a weak solution of \eqref{1-7} if

\begin{equation}\label{1-10}
    \int_D\omega\nabla^\perp(G\omega+q)\cdot\nabla\xi dx=0
\end{equation}
for any $\xi\in C^\infty_c(D)$.
\end{definition}

It should be noted that if $\omega\in L^\infty(D)$, then by $L^p$ estimate and Sobolev embedding $G\omega\in C^{1,\alpha}(D)$ for any $\alpha\in(0,1)$, therefore the integral in \eqref{1-10} makes sense.

Our first result is as follows.
\begin{theorem}\label{1-11}
Let $q\in C^2(D)\cap C^1(\overline{D})$ be a harmonic function and $\kappa$ be a positive real number. Set $\mathcal{S}:=\{x\in \overline{D}\mid q(x)=\min_{ \overline{D}}q\}$. Then
for any given positive number $\lambda$ with $\lambda<\kappa|D|$($|\cdot|$ denotes the two-dimensional Lebesgue measure), there exists a weak solution $\omega^\lambda$ of \eqref{1-7} having the form
\begin{equation}\label{1-12}
  \omega^\lambda=\kappa I_{\Omega^\lambda}, \,\, \Omega^\lambda=\{x\in D\mid G\omega^\lambda(x)+q(x)<\mu^\lambda\},\,\,\kappa|\Omega^\lambda|=\lambda
\end{equation}
for some $\mu^\lambda\in\mathbb{R}$ depending on $\lambda$.
Furthermore, if $q$ is not a constant, then $\mathcal{S}\subset\partial D$ and $\Omega^\lambda$ approaches $\mathcal{S}$ as $\lambda\rightarrow0$, or equivalently, for any $\delta>0$, there exists $\lambda_0>0$, such that for any $\lambda<\lambda_0$, we have
\begin{equation}\label{1-14}
\Omega^\lambda\subset\mathcal{S}_\delta:=\{x\in D\mid dist(x,\mathcal{S})<\delta\}.
\end{equation}
\end{theorem}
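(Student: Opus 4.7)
The plan is to adapt Turkington's variational approach by constructing $\omega^\lambda$ as a minimizer of the convex functional
\[E(\omega) := \frac{1}{2}\int_D \omega\, G\omega\, dx + \int_D q\omega\, dx\]
over the admissible class
\[\mathcal{A}_\lambda := \Bigl\{\omega \in L^\infty(D) : 0 \le \omega \le \kappa \text{ a.e.},\ \textstyle\int_D \omega\, dx = \lambda\Bigr\}.\]
Since $\lambda < \kappa|D|$, $\mathcal{A}_\lambda$ is nonempty, bounded, and weak-$*$ closed in $L^\infty(D)$, and $E$ is sequentially weak-$*$ continuous on it (the linear term trivially, and the quadratic term because $\omega \mapsto G\omega$ is compact from $L^\infty$ into $C^{1,\alpha}(\overline{D})$), so a minimizer $\omega^\lambda$ exists. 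Note that although the integrand of $E$ differs from the kinetic energy $\tfrac12\int|\nabla(G\omega+q)|^2$ by the term $\int q\omega$, this modification is exactly what produces the correct nonlinearity involving the total stream function $G\omega + q$ in the Euler--Lagrange equation.

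First-variation analysis against admissible perturbations preserving the mass constraint yields a Lagrange multiplier $\mu^\lambda \in \mathbb{R}$ with $G\omega^\lambda + q = \mu^\lambda$ a.e.\ on $\{0 < \omega^\lambda < \kappa\}$, $G\omega^\lambda + q \ge \mu^\lambda$ on $\{\omega^\lambda = 0\}$, and $G\omega^\lambda + q \le \mu^\lambda$ on $\{\omega^\lambda = \kappa\}$. The bang-bang structure follows from a Stampacchia-type level-set identity: since $\psi^\lambda := G\omega^\lambda + q \in W^{2,p}(D)$ for every $p < \infty$, one has $\nabla\psi^\lambda = 0$ and $\Delta\psi^\lambda = 0$ a.e.\ on $\{\psi^\lambda = \mu^\lambda\}$, and combined with $-\Delta\psi^\lambda = \omega^\lambda$ this forces $\omega^\lambda = 0$ a.e.\ on the equality set. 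The three Euler--Lagrange conditions then collapse to $\omega^\lambda = \kappa\, I_{\Omega^\lambda}$ with $\Omega^\lambda = \{\psi^\lambda < \mu^\lambda\}$ up to a null set. That $\omega^\lambda$ satisfies the weak formulation \eqref{1-10} is immediate from the pointwise identity
\[\omega^\lambda \nabla^\perp\psi^\lambda = \kappa\, I_{\{\psi^\lambda - \mu^\lambda < 0\}}\, \nabla^\perp(\psi^\lambda - \mu^\lambda) = \kappa\, \nabla^\perp\bigl(\min(\psi^\lambda - \mu^\lambda,\, 0)\bigr),\]
whose right-hand side is distributionally divergence-free.

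For the concentration statement, the strong maximum principle applied to a non-constant harmonic $q$ forces $\mathcal{S} \subset \partial D$. Fix $x_0 \in \mathcal{S}$ and compare $E(\omega^\lambda)$ with the competitor $\tilde\omega_\lambda := \kappa\, I_{\tilde\Omega_\lambda}$, where $\tilde\Omega_\lambda \subset D$ has area $\lambda/\kappa$ and clusters at $x_0$. A direct computation shows $\tfrac12\int \tilde\omega_\lambda G\tilde\omega_\lambda = O(\lambda^2 \log(1/\lambda)) = o(\lambda)$ and, by continuity of $q$, $\int q\tilde\omega_\lambda = \lambda \min_{\overline D} q + o(\lambda)$. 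Since $\int \omega^\lambda G\omega^\lambda \ge 0$, the inequality $E(\omega^\lambda) \le E(\tilde\omega_\lambda)$ implies $\int q\omega^\lambda \le \lambda \min_{\overline D} q + o(\lambda)$. Using the $L^p$-integrability (any $p < \infty$) of the kernel $G(x,\cdot)$, $\|G\omega^\lambda\|_{L^\infty}\to 0$ as $|\Omega^\lambda|=\lambda/\kappa \to 0$, so from $\Omega^\lambda \subset \{q < \mu^\lambda\}$ and $|\Omega^\lambda| \to 0$ one deduces $\mu^\lambda \to \min_{\overline D} q$: otherwise, along some subsequence $\{q < \min q + \eta\}$ would be contained in $\Omega^\lambda$ for a fixed $\eta>0$, contradicting $|\Omega^\lambda| \to 0$. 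Uniform continuity of $q$ on the compact set $\overline D$ then provides, for every $\delta > 0$, some $\ep > 0$ with $\{q < \min q + \ep\}\cap \overline D \subset \mathcal{S}_\delta$; combined with $\mu^\lambda < \min q + \ep$ for $\lambda$ small, this yields $\Omega^\lambda \subset \mathcal{S}_\delta$. The main technical obstacle is the construction of the test function $\tilde\omega_\lambda$: since $\mathcal{S}$ is located on $\partial D$, one must build $\tilde\Omega_\lambda$ that simultaneously approaches $x_0 \in \partial D$ and remains inside $D$, which requires a local flattening of $\partial D$ near $x_0$ in order to obtain the sharp $o(\lambda)$ self-energy bound.
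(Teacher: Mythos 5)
Your proposal is correct and follows the paper's overall strategy (minimizing the same functional $E$ over the same constraint class, then obtaining the bang--bang structure from the first-order conditions together with the Sobolev level-set argument), but it departs from the paper in two steps, and in both cases your route is more elementary. First, to verify the weak formulation \eqref{1-10}, the paper deforms the minimizer along the area-preserving flow generated by $\nabla^\perp\xi$ and differentiates $E$ at $t=0$, whereas you use the pointwise identity $\omega^\lambda\nabla^\perp(G\omega^\lambda+q)=\kappa\,\nabla^\perp\bigl(\min(G\omega^\lambda+q-\mu^\lambda,0)\bigr)$ a.e.\ and the fact that the perpendicular gradient of a $W^{1,1}$ function is distributionally divergence-free; this is valid (it uses only the structure \eqref{1-12}, not minimality) and shorter. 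Second, for the localization \eqref{1-14}, the paper runs a quantitative chain --- the test-function bound of Lemma \ref{3-10}, the estimate of Lemma \ref{3-15}, and the identity \eqref{3-22} --- to obtain $\mu^\lambda\le\min_{\overline D}q+C\lambda^{1/2}$, while your argument is soft: since $\|G\omega^\lambda\|_{L^\infty}\to0$, if $\mu^\lambda\ge\min_{\overline D}q+\eta$ along a sequence then the fixed-positive-measure set $\{x\in D\mid q(x)<\min_{\overline D}q+\eta/2\}$ would be contained in $\Omega^\lambda$, contradicting $|\Omega^\lambda|=\lambda/\kappa\to0$; combined with $G\omega^\lambda\ge0$ (so $q<\mu^\lambda$ on $\Omega^\lambda$) and the fact that sublevel sets $\{q<\min_{\overline D}q+\ep\}$ lie in $\mathcal{S}_\delta$, this gives \eqref{1-14}. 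Note that this makes your preceding energy comparison with the competitor $\tilde\omega_\lambda$ redundant for the qualitative statement, and the ``technical obstacle'' you flag (flattening $\partial D$ to get an $o(\lambda)$ self-energy bound) is a non-issue: the paper's competitor is simply a ball of area $\lambda/\kappa$ placed inside $D$ at distance of order $\lambda^{1/2}$ from a boundary minimum point via the interior sphere condition, and the crude bound $\|G\tilde\omega_\lambda\|_{L^\infty}\le C\lambda^{1/2}$ already yields the $O(\lambda^{3/2})$ error the paper needs. What you give up relative to the paper is only the explicit rate $\mu^\lambda-\min_{\overline D}q\le C\lambda^{1/2}$ (hence $\sup_{\Omega^\lambda}|q-\min_{\overline D}q|\le C\lambda^{1/2}$), which the theorem as stated does not require. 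Two small points of precision: the inclusion $\{q<\min_{\overline D}q+\ep\}\cap\overline D\subset\mathcal{S}_\delta$ follows from compactness of $\overline D$ and continuity of $q$ (a contradiction/subsequence argument), not from uniform continuity alone; and in the first-variation step you should record, as in Proposition \ref{3-3}, the inequality $\sup_{\{\omega^\lambda>0\}}(G\omega^\lambda+q)\le\inf_{\{\omega^\lambda<\kappa\}}(G\omega^\lambda+q)$, since this is what produces a single constant $\mu^\lambda$ serving all three Euler--Lagrange conditions at once.
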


\begin{remark}\label{2-101}
In Theorem \ref{1-11}, if $q$ is not a constant, then by the strong maximum principle
\[\{x\in \overline{D}\mid q(x)=\min_{ \overline{D}}q\}\subset \partial D,\]
so $\Omega^\lambda$ approaches the boundary of the domain as $\lambda\rightarrow0^+.$
\end{remark}

\begin{remark}\label{2-102}
In Theorem \ref{1-11} we show the existence of steady vortex patches with positive vorticity near the set of global minimum points of $q$. Since $(\omega,q)$ satisfies \eqref{1-7} if and only if $(-\omega,-q)$ satisfies \eqref{1-7}, so by reverting the signs of $\omega^\lambda$ and $q$ in Theorem \ref{1-11}, we can also prove the existence of steady vortex patches with negative vorticity near the set of global maximum points of $q$. In Section 5 we consider the other two cases: vortex patches with positive vorticity near the set of global maximum points of $q$ and vortex patches with negative vorticity near the set of global minimum points of $q$.
\end{remark}

Our next result shows that each finite collection of strict extreme points of $q$ corresponds to a family of steady multiple vortex patches shrinking to it.
\begin{theorem}\label{1-15}
Let $q\in C^2(D)\cap C^1(\overline{D})$ be a harmonic function, $k,l$ be two nonnegative integers and $\kappa_1,\cdot\cdot\cdot,\kappa_{k+l}$ be $k+l$ positive real numbers. Suppose that $\{x_1,x_2,\cdot\cdot\cdot,x_k\}\subset\partial D$ are $k$ different strict local minimum points of $q$ on $\overline{D}$, and $\{x_{k+1},x_{k+2},\cdot\cdot\cdot,x_{k+l}\}\subset\partial D$ are $l$ different strict local maximum points of $q$ on $\overline{D}$.
Then there exists a $\lambda_0>0$, such that for any $0<\lambda<\lambda_0$, there exists a weak solution $w^\lambda$ of \eqref{1-7} having the form
\begin{equation}\label{1-16}
  w^\lambda=\sum_{i=1}^k\kappa_iI_{U_i^\lambda}-\sum_{j=k+1}^{k+l}\kappa_jI_{U_j^\lambda},
\end{equation}
where for each $1\leq i\leq k$
\begin{equation}\label{5-4-3}
U_i^\lambda =\{x\in D\mid Gw^\lambda(x)+q(x)<\nu_i^\lambda\}\cap{B_{\delta_0}(x_i)},\,\,|U_i^\lambda|=\lambda,
\end{equation}
and for each $k+1\leq j\leq k+l$
\begin{equation}\label{5-5-3}
U_j^\lambda =\{x\in D\mid Gw^\lambda(x)+q(x)>\nu_j^\lambda\}\cap{B_{\delta_0}(x_j)},\,\,|U_j^\lambda|=\lambda,
\end{equation}
where $\nu_p^\lambda\in\mathbb{R},$ for $1\leq p\leq k+l$, is the Lagrange multiplier depending on $\lambda$.
Here $\delta_0$ is chosen to be sufficiently small such that $x_i$ is the unique minimum point of $q$ on $\overline{B_{\delta_0}(x_i)\cap D}$ for $i=1,\cdot\cdot\cdot,k$, $x_i$ is the unique maximum point of $q$ on $\overline{B_{\delta_0}(x_j)\cap D}$ for $j=k+1,\cdot\cdot\cdot,k+l$, and $\overline{B_{\delta_0}(x_{p_1})\cap D}\cap\overline{B_{\delta_0}(x_{p_2})\cap D}=\varnothing$ for $1\leq p_1,p_2\leq k+l, p_1\neq p_2.$
Moreover, $U_p^\lambda$ shrinks to $x_p$ for each $1\leq p\leq k+l$ as $\lambda\rightarrow0^+$, or equivalently, for any $\delta>0$, there exists a $\lambda_0>0$, such that for any $0<\lambda<\lambda_0$, we have
\begin{equation}\label{1-17}
U^\lambda_p\subset B_{\delta}(x_p)\cap D.
\end{equation}

\end{theorem}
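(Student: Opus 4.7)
The plan is to reduce Theorem \ref{1-15} to a localized version of the variational problem behind Theorem \ref{1-11}, applied simultaneously near all $k+l$ prescribed extreme points.

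First, fix the separation parameter $\delta_0$ as in the statement and introduce the admissible class
\[
\mathcal{M}_\lambda = \Bigl\{ w = \sum_{i=1}^k \omega_i - \sum_{j=k+1}^{k+l} \omega_j :\ 0 \le \omega_p \le \kappa_p,\ \mathrm{supp}(\omega_p) \subset \overline{B_{\delta_0}(x_p)\cap D},\ \int_D \omega_p\, dx = \kappa_p \lambda \Bigr\}.
\]
On this class, consider the energy
\[
F(w) = \tfrac12 \int_D w\, Gw\, dx + \int_D q\, w\, dx,
\]
and minimize $F$ over $\mathcal{M}_\lambda$. The set $\mathcal{M}_\lambda$ is bounded in $L^\infty(D)$ and weak-$*$ closed, while $F$ is weak-$*$ continuous (since $G\colon L^2 \to H^1_0$ is compact), so a minimizer $w^\lambda$ exists.

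Next, I would derive the Euler-Lagrange conditions by varying each component $\omega_p$ independently within $\mathcal{M}_\lambda$. A standard bang-bang/rearrangement argument (swap mass between super- and sub-level sets of $Gw^\lambda + q$ to lower $F$) forces each $\omega_i$ with $i \le k$ to equal $\kappa_i I_{U_i^\lambda}$ on $B_{\delta_0}(x_i)\cap D$ with $U_i^\lambda = \{Gw^\lambda + q < \nu_i^\lambda\} \cap B_{\delta_0}(x_i)$, and symmetrically each $\omega_j$ with $j > k$ to equal $\kappa_j I_{U_j^\lambda}$ on $B_{\delta_0}(x_j)\cap D$ with $U_j^\lambda = \{Gw^\lambda + q > \nu_j^\lambda\} \cap B_{\delta_0}(x_j)$; the sign reversal reflects the negative coefficient in front of $\omega_j$ in the definition of $w$. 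Because $w^\lambda$ is constant on connected components of the complement of the level sets of $Gw^\lambda + q$ inside each ball, and the balls are mutually disjoint, the verification that $w^\lambda$ satisfies \eqref{1-10} runs exactly as in the single-patch case.

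The decisive step is the concentration property \eqref{1-17}. For each $p$, I would construct a competitor $\tilde\omega_p = \kappa_p I_{V_p}$ where $V_p \subset B_{\delta_0}(x_p)\cap D$ is a set of measure $\lambda$ clustered near $x_p$, and use $F(w^\lambda) \le F(\tilde w)$ with $\tilde w = \sum \tilde\omega_i - \sum \tilde\omega_j$. The cross-interaction terms $\int \omega_p G\omega_{p'}$ for $p \ne p'$ are $O(\lambda^2)$ because $G$ is bounded on the compact product $\overline{B_{\delta_0}(x_p)\cap D} \times \overline{B_{\delta_0}(x_{p'})\cap D}$; the self-interaction terms are $O(\lambda^2 \log(1/\lambda))$ from the logarithmic singularity of $G$. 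Hence the linear potential $\int q\, w$ controls $F$ to leading order, and the comparison forces $\kappa_p^{-1}\lambda^{-1}\int_D q\, \omega_p \, dx \to q(x_p)$ as $\lambda \to 0^+$. Since $x_p$ is the \emph{strict} extremum of $q$ on $\overline{B_{\delta_0}(x_p)\cap D}$, this and the pointwise bound $\omega_p \le \kappa_p$ force $U_p^\lambda$ to concentrate at $x_p$ in measure, yielding \eqref{1-17}.

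The main obstacle is this concentration step, where one must simultaneously track $k+l$ interacting patches of mixed signs and verify that the localization constraint $\mathrm{supp}(\omega_p) \subset \overline{B_{\delta_0}(x_p)\cap D}$ is \emph{inactive} at the minimizer for small $\lambda$ — that is, no patch tries to touch the relative boundary $\partial B_{\delta_0}(x_p)\cap D$ — so that no extra Lagrange multiplier is produced and the clean Euler-Lagrange description \eqref{5-4-3}, \eqref{5-5-3} is valid. Both issues resolve once concentration at $x_p$ is obtained, but establishing that concentration requires the careful energy bookkeeping outlined above, together with the strict-extremum hypothesis on each $x_p$.
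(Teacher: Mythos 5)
Your overall architecture coincides with the paper's: minimize the same energy over a localized multi-component class (the paper's $\mathcal{N}^\lambda$), derive the bang-bang level-set form \eqref{5-4-3}, \eqref{5-5-3} by one-sided variations supported in each ball, prove concentration, and only then verify the weak-solution property via the area-preserving flow, whose admissibility requires the patch supports to stay away from $\partial B_{\delta_0}(x_p)$. The genuine gap is in your decisive concentration step. The comparison $F(w^\lambda)\le F(\tilde w)$ with small-ball competitors, together with $q\ge q(x_i)$ (resp.\ $q\le q(x_j)$) on each ball, does give $0\le\int_D (q-q(x_i))\omega_i\,dx\le C\lambda^{3/2}$ and its analogue at the maxima, i.e.\ convergence of the $\omega_p$-average of $q$ to $q(x_p)$, hence concentration of the mass \emph{in measure}. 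But \eqref{1-17} asserts containment of the whole patch, $U_p^\lambda\subset B_\delta(x_p)\cap D$, and concentration in measure does not imply it: a vanishing fraction of mass sitting at distance $\ge\delta$ from $x_p$ is fully compatible with your average estimate. This stronger containment is exactly what you need, both because it is the statement of the theorem and because without it the perturbation $w^\lambda(\Phi_t(\cdot))$ may leave the admissible class, so the weak-solution verification collapses; your closing paragraph flags the issue but supplies no mechanism to resolve it.

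The missing mechanism is control of the Lagrange multipliers, which is how the paper closes the argument: a lower bound $\nu_i^\lambda> q(x_i)-C\lambda^{1/2}$ by evaluating $Gw^\lambda+q$ at a point of $U_i^\lambda$ and using $|Gw^\lambda|_{L^\infty(D)}\le C\lambda^{1/2}$, and an upper bound $\nu_i^\lambda\le q(x_i)+C\lambda^{1/2}$ from the identity $E(w^\lambda)=-\frac{1}{2}\int_D Gw^\lambda\, w^\lambda\,dx+\sum_p\int_D(Gw^\lambda+q-\nu_p^\lambda)w_p^\lambda\,dx+\lambda(\sum_i\nu_i^\lambda-\sum_j\nu_j^\lambda)$, combined with the competitor upper bound on $E(w^\lambda)$ and the estimate $\int_D(Gw^\lambda+q-\nu_p^\lambda)w_p^\lambda\,dx\ge -C\lambda^{3/2}$, proved via the embedding $W^{1,1}\hookrightarrow L^2$ applied to the negative part of $Gw^\lambda+q-\nu_p^\lambda$ (Lemmas \ref{5-101}, \ref{5-106}, \ref{5-200}, \ref{5-112}). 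The two-sided bound gives $\sup_{x\in U_p^\lambda}|q(x)-q(x_p)|\le C\lambda^{1/2}$, and strict extremality then yields \eqref{1-17} for the \emph{entire} patch, after which the flow perturbation is admissible for small $\lambda$. Alternatively, you could exploit the level-set structure you did establish: if some $x\in U_i^\lambda$ had $|x-x_i|\ge\delta$, then $\nu_i^\lambda\ge q(x_i)+\eta_\delta-C\lambda^{1/2}$ with $\eta_\delta>0$ fixed, so the fixed-measure set $\{x\in B_{\delta_0}(x_i)\cap D\mid q(x)<q(x_i)+\eta_\delta/2\}$ would be contained in $U_i^\lambda$, contradicting $|U_i^\lambda|=\lambda\to 0^+$. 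One of these arguments must be added; as written, the passage from your average estimate to \eqref{1-17} is unjustified.
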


\section{Proof of Theorem \ref{1-11}}
In this section we prove Theorem \ref{1-11}. To begin with, we consider a minimization problem for the vorticity and study the limiting behavior of the minimizer.

\subsection{Minimization Problem}

Let $\kappa$ be a fixed positive number. For $0<\lambda<\kappa|D|$, we define the vorticity class $\mathcal{M}^\lambda$ as follows
\begin{equation}\label{3-1}
\mathcal{M}^\lambda:=\{\omega\in L^\infty(D)\mid 0\leq\omega\leq\kappa, \int_D\omega(x)dx=\lambda\}.
\end{equation}
Note that $\mathcal{M}^\lambda$ is not empty since $\lambda<\kappa|D|$. The variational problem is to minimize the functional
\begin{equation}\label{3-2}
  E(\omega):=\frac{1}{2}\int_D\int_DG(x,y)\omega(x)\omega(y)dxdy+\int_Dq(x)\omega(x)dx
\end{equation}
in the class $\mathcal{M}^\lambda$, that is,
\begin{equation}
c_\lambda=\inf\{E(\omega)\mid\omega\in\mathcal{M}^\lambda\}.
\end{equation}

We call $\upsilon\in\mathcal{M}^\lambda$ an absolute minimizer if $E(\upsilon)\leq E(\omega)$ for all $\omega\in \mathcal{M}^\lambda$. In order to study the existence of an absolute minimizer of $E$, we need to establish two preliminary results.

\begin{lemma}
$\mathcal{M}^\lambda$ is a sequentially compact subset of $L^\infty(D)$ in the weak star topology, that is, for any sequence $\{\omega_n\}\subset L^\infty(D)$, $n=1,2,\cdot\cdot\cdot$, there exist a subsequence $\{\omega_{n_j}\}$ and $w_0\in\mathcal{M}^\lambda$ such that for any $\zeta\in L^1(D)$
\begin{equation}\label{3-344}
\lim_{j\rightarrow+\infty}\int_D\zeta(x)\omega_{n_j}(x)dx=\int_D\zeta(x)\omega_{0}(x)dx.
\end{equation}

\end{lemma}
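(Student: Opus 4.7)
The plan is to combine the Banach--Alaoglu theorem with the separability of $L^1(D)$ to extract a weak-star convergent subsequence, and then verify that the limit lies in $\mathcal{M}^\lambda$ by testing against suitably chosen $L^1$ functions.

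First I would observe that every $\omega\in\mathcal{M}^\lambda$ satisfies $\|\omega\|_{L^\infty(D)}\le\kappa$, so $\mathcal{M}^\lambda$ is contained in the closed ball of radius $\kappa$ in $L^\infty(D)=(L^1(D))^*$. Since $D$ is a bounded domain, $L^1(D)$ is separable, and a standard consequence of the Banach--Alaoglu theorem is that bounded sets of the dual of a separable Banach space are sequentially weak-star compact. Therefore, given a sequence $\{\omega_n\}\subset\mathcal{M}^\lambda$, there exist a subsequence $\{\omega_{n_j}\}$ and some $\omega_0\in L^\infty(D)$ with $\|\omega_0\|_{L^\infty}\le\kappa$ such that \eqref{3-344} holds for every $\zeta\in L^1(D)$.

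It remains to check that $\omega_0\in\mathcal{M}^\lambda$, i.e. that the pointwise bounds $0\le\omega_0\le\kappa$ a.e.\ and the integral constraint $\int_D\omega_0\,dx=\lambda$ are preserved under weak-star convergence. The integral constraint is immediate: take $\zeta\equiv 1$, which belongs to $L^1(D)$ since $|D|<\infty$, and pass to the limit in $\int_D\omega_{n_j}\,dx=\lambda$. For the pointwise bounds I would argue by contradiction using characteristic functions as test functions: if the set $E_-:=\{x\in D\mid\omega_0(x)<0\}$ had positive measure, then plugging $\zeta=I_{E_-}\in L^1(D)$ into \eqref{3-344} would give $0\le\int_{E_-}\omega_{n_j}\,dx\to\int_{E_-}\omega_0\,dx<0$, a contradiction. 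The upper bound is obtained analogously by taking $\zeta=I_{E_+}$ with $E_+:=\{x\in D\mid\omega_0(x)>\kappa\}$ and using $\omega_{n_j}\le\kappa$.

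There is no real obstacle here: the proof reduces to a direct application of weak-star compactness for bounded sets in the dual of a separable space, together with the standard observation that linear inequality constraints defined by integration against $L^1$ functions are preserved under weak-star limits. The only mildly subtle point is the explicit use of separability of $L^1(D)$ to upgrade Banach--Alaoglu from topological to sequential compactness, which I would just cite rather than reprove.
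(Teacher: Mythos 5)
Your proof is correct and follows essentially the same route as the paper: extract a weak-star convergent subsequence from the $L^\infty$ bound (Banach--Alaoglu plus separability of $L^1(D)$, which the paper leaves implicit), then verify $\omega_0\in\mathcal{M}^\lambda$ by testing against $\zeta\equiv1$ and characteristic functions. Your contradiction argument for the pointwise bounds, using $\{\omega_0<0\}$ and $\{\omega_0>\kappa\}$ directly, is a minor cosmetic simplification of the paper's use of the set $\{\omega_0\ge\kappa+\varepsilon_0\}$, but the idea is the same.
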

\begin{proof}
As $\mathcal{M}^\lambda$ is clearly a bounded set in $L^\infty(D)$, there is, for any sequence $\{\omega_n\}\subset L^\infty(D)$, a subsequence $\{\omega_{n_j}\}$ such that $\omega_{n_j} \rightarrow \omega_0$ weakly star in $L^\infty$ as $j\rightarrow+\infty$ for some $\omega_0\in L^\infty(D)$. So it suffices to show $\omega_0\in\mathcal{M}^\lambda,$ namely, $\int_D\omega_0(x)dx=1$ and $0\leq\omega_0\leq \kappa$ a.e. in $D$.

Firstly, by choosing $\zeta\equiv1$ in \eqref{3-344} we have
\[\lim_{j\rightarrow +\infty}\int_D\omega_{n_j}(x)dx=\int_D\omega_0(x)dx=1.\]
Now we prove $ \omega_0\leq\kappa$ by contradiction. Suppose that $|\{x\in D\mid\omega_0(x)>\kappa\}|>0$, then there exists a $\varepsilon_0>0$ such that $|\{x\in D\mid\omega_0(x)\geq\kappa+\varepsilon_0\}|>0$. Denote $A=\{x\in D\mid\omega_0(x)\geq\kappa+\varepsilon_0\}$. Then by taking $\zeta=I_A$ in \eqref{3-344} we have
\[0=\lim_{j\rightarrow +\infty}\int_D(\omega_0(x)-\omega_{n_j}(x))\zeta(x)dx=\lim_{j\rightarrow +\infty}\int_{A}(\omega_0(x)-\omega_{n_j}(x))dx\geq\varepsilon_0|A|>0,\]
which is a contradiction. Similarly we can prove $\omega_0\geq 0$.

\end{proof}

\begin{lemma}\label{3-345}
$E$ is weakly star continuous in $L^\infty(D)$.
\end{lemma}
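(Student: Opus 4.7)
The plan is to show that if $\omega_n \to \omega_0$ weakly-$*$ in $L^\infty(D)$ then $E(\omega_n)\to E(\omega_0)$. The linear part is immediate: since $q\in C^1(\overline{D})\subset L^1(D)$, by definition of weak-$*$ convergence $\int_D q\,\omega_n\,dx\to\int_D q\,\omega_0\,dx$. Hence all the work lies in the quadratic term $Q(\omega):=\tfrac12\iint_{D\times D}G(x,y)\omega(x)\omega(y)\,dxdy$.

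Using the symmetry $G(x,y)=G(y,x)$, I would split the difference as
\[
2\bigl(Q(\omega_n)-Q(\omega_0)\bigr)=\int_D (G\omega_n-G\omega_0)\,\omega_n\,dx+\int_D G\omega_0\,(\omega_n-\omega_0)\,dx.
\]
The second piece is easy: since $\omega_0\in L^\infty(D)$, standard $L^p$ elliptic estimates for $-\Delta(G\omega_0)=\omega_0$ give $G\omega_0\in C^{1,\alpha}(\overline{D})\subset L^1(D)$, so by weak-$*$ convergence this integral tends to $0$. For the first piece, because $\omega_n$ is uniformly bounded in $L^1(D)$, it would be enough to establish the uniform convergence $G\omega_n\to G\omega_0$ on $\overline{D}$, since then the first piece is bounded by $\|G\omega_n-G\omega_0\|_{\infty}\,\|\omega_n\|_{L^1(D)}\to 0$.

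To get this uniform convergence, I would combine elliptic regularity with an Arzel\`a--Ascoli argument. Weak-$*$ convergent sequences are bounded in $L^\infty(D)$ by the uniform boundedness principle, so applying $L^p$ theory to $-\Delta(G\omega_n)=\omega_n$ with zero Dirichlet data shows that $G\omega_n$ is bounded in $W^{2,p}(D)$ for every $p<\infty$, hence in $C^{1,\alpha}(\overline{D})$ by Sobolev embedding. Arzel\`a--Ascoli then extracts a subsequence converging uniformly on $\overline{D}$. To identify the limit, I fix $y\in\overline{D}$ and observe that $G(\cdot,y)\in L^1(D)$ (the planar Green's function has only a logarithmic singularity), so weak-$*$ convergence of $\omega_n$ yields $G\omega_n(y)\to G\omega_0(y)$ pointwise. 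Uniqueness of the limit along every uniformly convergent subsequence forces the whole sequence $G\omega_n$ to converge uniformly to $G\omega_0$ on $\overline{D}$, which completes the continuity claim.

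The main technical point is the uniform convergence of $G\omega_n$ to $G\omega_0$; everything else follows directly from the definition of weak-$*$ convergence and the regularity of the Green's operator. I do not anticipate any serious obstacle beyond assembling these standard ingredients.
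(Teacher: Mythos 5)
Your proof is correct and follows essentially the same route as the paper: both arguments reduce the claim to the uniform convergence $G\omega_n\to G\omega_0$ on $\overline{D}$, obtained from $L^p$ elliptic estimates plus a compactness step (the paper uses weak $W^{2,p}$ convergence and the compact Sobolev embedding into $C^1(\overline{D})$, you use the $C^{1,\alpha}$ bound with Arzel\`a--Ascoli and identify the limit pointwise), and then pass to the limit in the quadratic and linear terms via weak-$*$ convergence. Your write-up merely makes explicit the splitting of the quadratic term and the identification of the limit, which the paper leaves implicit.
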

\begin{proof}
Let $\{f_n\}$ be a sequence in $L^\infty(D)$ such that for some $f_0\in L^\infty(D)$, $f_n\rightarrow f_0$ weakly star in $ L^\infty(D)$ as $n\rightarrow+\infty$. It suffices to prove that $\lim_{n\rightarrow+\infty}E(f_n)=E(f_0)$. First by $L^p$ estimate we have $Gf_n\rightarrow Gf_0$ weakly in $W^{2,p}$ for any $1<p<+\infty,$ then by Sobolev embedding
$Gf_n\rightarrow Gf_0$ in $C^1(\overline{D})$, as $n\rightarrow+\infty$. By the definition of weak star convergence we can easily deduce that $\lim_{n\rightarrow+\infty}E(f_n)=E(f_0)$.

\end{proof}

Now we are ready to show the existence of an absolute minimizer.
\begin{proposition}\label{3-3}
$c_\lambda$ is achieved by an absolute minimizer $\omega^\lambda\in\mathcal{M}^\lambda$ with the form
\begin{equation}\label{3-4}
  \omega^\lambda=\kappa I_{\Omega^\lambda},\,\,\Omega^\lambda=\{x\in D\mid G\omega^\lambda(x)+q(x)<\mu^\lambda\}
\end{equation}
for some $\mu^\lambda$ depending on $\lambda$.
\end{proposition}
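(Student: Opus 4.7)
For existence I would apply the direct method: pick a minimizing sequence $\{\omega_n\} \subset \mathcal{M}^\lambda$ with $E(\omega_n) \to c_\lambda$; by the sequential weak-$*$ compactness of $\mathcal{M}^\lambda$ (the compactness lemma above) some subsequence converges weakly-$*$ in $L^\infty(D)$ to a limit $\omega^\lambda \in \mathcal{M}^\lambda$, and by the weak-$*$ continuity of $E$ (Lemma \ref{3-345}) we get $E(\omega^\lambda)=\lim_n E(\omega_n)=c_\lambda$, so $\omega^\lambda$ is an absolute minimizer.

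To identify the form of $\omega^\lambda$ I would derive a first-order condition. Since $\mathcal{M}^\lambda$ is convex, for any $\tilde\omega\in\mathcal{M}^\lambda$ the perturbation $\omega^\lambda+s(\tilde\omega-\omega^\lambda)$ lies in $\mathcal{M}^\lambda$ for $s\in[0,1]$; expanding $E$ and using the symmetry of $G$ gives
\[
\left.\frac{d}{ds}E(\omega^\lambda+s(\tilde\omega-\omega^\lambda))\right|_{s=0^+}=\int_D (G\omega^\lambda+q)(\tilde\omega-\omega^\lambda)\,dx\geq 0,
\]
so $\omega^\lambda$ minimizes the linear functional $\tilde\omega\mapsto\int_D(G\omega^\lambda+q)\tilde\omega\,dx$ over $\mathcal{M}^\lambda$. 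A bathtub-principle argument (the KKT optimality conditions for a linear program with box and mass constraints) then produces a Lagrange multiplier $\mu^\lambda\in\mathbb{R}$ such that $\omega^\lambda=\kappa$ a.e.\ on $\{G\omega^\lambda+q<\mu^\lambda\}$ and $\omega^\lambda=0$ a.e.\ on $\{G\omega^\lambda+q>\mu^\lambda\}$, with $\mu^\lambda$ fixed by the mass constraint $\int_D\omega^\lambda\,dx=\lambda$.

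The main obstacle is removing the remaining ambiguity on the level set $N:=\{G\omega^\lambda+q=\mu^\lambda\}$, on which the variational inequality only forces $0\leq\omega^\lambda\leq\kappa$. Set $u:=G\omega^\lambda+q$. Since $\omega^\lambda\in L^\infty(D)$, elliptic regularity gives $G\omega^\lambda\in W^{2,p}(D)$ for every $p<\infty$, hence $u\in W^{2,p}_{\mathrm{loc}}(D)$. By the classical Stampacchia theorem, $\nabla u=0$ a.e.\ on $N$; applying the same theorem componentwise to $\nabla u$ (which lies in $W^{1,p}_{\mathrm{loc}}$) yields $D^2 u=0$, and in particular $\Delta u=0$, a.e.\ on $N$. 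Since $\Delta q=0$ and $-\Delta(G\omega^\lambda)=\omega^\lambda$, we have $-\Delta u=\omega^\lambda$ a.e.\ in $D$, so $\omega^\lambda=0$ a.e.\ on $N$. This absorbs the level-set contribution into the complement and yields the exact representation \eqref{3-4}.
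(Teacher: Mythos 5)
Your proposal is correct and follows the same overall skeleton as the paper: direct method (weak-$*$ compactness of $\mathcal{M}^\lambda$ plus weak-$*$ continuity of $E$) for existence, a first-order condition to identify the patch structure, and the Sobolev level-set argument to kill the vorticity on $\{G\omega^\lambda+q=\mu^\lambda\}$. Where you diverge is in how the first-order condition is obtained: you exploit the convexity of $\mathcal{M}^\lambda$, perturb toward an arbitrary competitor $\tilde\omega$, and read off that $\omega^\lambda$ minimizes the linearized functional $\tilde\omega\mapsto\int_D(G\omega^\lambda+q)\tilde\omega\,dx$ over $\mathcal{M}^\lambda$, then invoke the bathtub principle (KKT for the box-plus-mass constraints) to produce $\mu^\lambda$; the paper instead perturbs by specially constructed pairs $z_0,z_1$ supported where $\omega^\lambda\le\kappa-\delta$ and $\omega^\lambda\ge\delta$ respectively, deduces
\[
\sup_{\{\omega^\lambda>0\}}(G\omega^\lambda+q)\le\inf_{\{\omega^\lambda<\kappa\}}(G\omega^\lambda+q),
\]
and defines $\mu^\lambda$ as the common value. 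The two derivations are essentially equivalent in content, but yours is arguably cleaner, since convexity gives the global variational inequality in one stroke and the bathtub principle packages the multiplier argument; the paper's version is more self-contained but requires the slightly delicate discussion of the admissibility of $\omega_s$ and of the sup/inf equality. You are also more careful than the paper on the level set: the paper passes directly from $\nabla(G\omega^\lambda+q)=0$ a.e.\ on the level set to $\Delta(G\omega^\lambda+q)=0$ a.e.\ there, while you justify this by applying the Stampacchia-type theorem a second time, componentwise to $\nabla u\in W^{1,p}_{\mathrm{loc}}$, which is exactly the missing justification. No gaps.
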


\begin{proof}
First we show that $E$ attains its minimum on $\mathcal{M}^\lambda$. By integration by parts, we have for any $\omega\in \mathcal{M}^\lambda$
\begin{equation}\label{3-5}
 E(\omega)=\frac{1}{2}\int_D|\nabla G\omega(x)|^2dx+\int_Dq(x)\omega(x)dx\geq \lambda\min_{\overline{D}}q,
\end{equation}
which means that $E$ is bounded from below on $\mathcal{M}^\lambda$. Now we choose a sequence $\{\omega_n\}\subset\mathcal{M}^\lambda$ such that
\[\lim_{n\rightarrow+\infty}E(\omega_n)=\inf_{\omega\in\mathcal{M}^\lambda}E(\omega).\]
Since $\mathcal{M}^\lambda$ is a sequentially compact subset of $L^\infty(D)$ in the weak star topology, there exists a subsequence $\{\omega_{n_j}\}$ and a $\omega^\lambda\in \mathcal{M}^\lambda$ such that $\omega_{n_j}\rightarrow \omega^\lambda$ weakly star in $L^\infty(D)$ as $j\rightarrow+\infty$. Then by the weak star continuity of $E$ in $L^\infty(D)$ we obtain
\begin{equation}\label{3-6}
 E(\omega^\lambda)=\lim_{j\rightarrow+\infty}E(\omega_{n_j})=\inf_{\omega\in\mathcal{M}^\lambda}E(\omega),
\end{equation}
which means that $\omega^\lambda$ is an absolute minimizer.

Now we prove that $\omega^\lambda$ has the following form
\begin{equation}\label{3-7}
  \omega^\lambda=\kappa I_{\Omega^\lambda},\,\,\Omega^\lambda=\{x\in D\mid G\omega^\lambda(x)+q(x)<\mu^\lambda\}
\end{equation}
for some $\mu^\lambda$ depending on $\lambda$. To show this, we choose a family of test functions $\omega_s=\omega^\lambda+s(z_0-z_1)$, $s>0$, where $z_0$ and $z_1$ satisfy
\begin{equation}
\begin{cases}
z_0,z_1\in L^\infty(D),\,z_0,z_1\geq 0,\text{ a.e. in } D,

 \\ \int_Dz_0(x)dx=\int_D z_1(x)dx,
 \\z_0=0 \quad\text{in } D\setminus\{x\in D\mid\omega^\lambda(x)\leq\kappa-\delta\},
 \\z_1=0 \quad\text{in } D\setminus\{x\in D\mid\omega^\lambda(x)\geq\delta\}.
\end{cases}
\end{equation}
Here $\delta$ is a positive number. It is not hard to check that for fixed $z_0,z_1$ and $\delta$, $\omega_s\in \mathcal{M}^\lambda$ provided that $s$ is sufficiently small. Since $\omega^\lambda$ is an absolute minimizer, we have
\[0\leq\frac{dE(\omega_s)}{ds}|_{s=0^+}=\int_Dz_0(x)(G\omega^\lambda(x)+q(x))dx-\int_Dz_1(x)(G\omega^\lambda(x)+q(x))dx.\]
By the choice of $z_0$ and $z_1$ we obtain
\begin{equation}\label{1-103}
\sup_{\{x\in D\mid\omega^\lambda(x)>0\}}(G\omega^\lambda+q)\leq\inf_{\{x\in D\mid\omega^\lambda(x)<\kappa\}}(G\omega^\lambda+q).
\end{equation}
Since $D$ is simply-connected and $G\omega^\lambda+q$ is continuous, \eqref{1-103} is in fact an equality, that is,
\begin{equation}
\sup_{\{x\in D\mid\omega^\lambda(x)>0\}}(G\omega^\lambda+q)=\inf_{\{x\in D\mid\omega^\lambda(x)<\kappa\}}(G\omega^\lambda+q).
\end{equation}
Now we can define
\[\mu^\lambda:=\sup_{\{x\in D\mid\omega^\lambda(x)>0\}}(G\omega^\lambda+q)=\inf_{\{x\in D\mid\omega^\lambda(x)<\kappa\}}(G\omega^\lambda+q).\]
It is clear that
\begin{equation}
\begin{cases}
\omega^\lambda=0\text{\,\,\,\,\,\,a.e.\,} \text{in }\{x\in D\mid G\omega^\lambda(x)+q(x)>\mu^\lambda\},
 \\ \omega^\lambda=\kappa\text{\,\,\,\,\,\,a.e.\,} \text{in }\{x\in D\mid G\omega^\lambda(x)+q(x)<\mu^\lambda\}.
\end{cases}
\end{equation}
By the property of Sobolev functions, on the level set $\{x\in D\mid G\omega^\lambda(x)+q(x)=\mu^\lambda\}$, we have $\nabla (G\omega^\lambda+q)=0\text{\,\,a.e.}$, therefore $\omega^\lambda=-\Delta( G\omega^\lambda+q)=0\text{\,\,a.e.}$.

Altogether, we obtain
\[\omega^\lambda=\kappa I_{\{x\in D\mid G\omega^\lambda(x)+q(x)<\mu^\lambda\}},\]
which is the desired result.

\end{proof}

Turkington \cite{T} considered the maximization of $E$ on $\mathcal{M}^\lambda$. In that case, the maximizer may not be unique, especially for domains with certain symmetries. But here for the minimization problem we have

\begin{lemma}\label{3-1001}
There is a unique absolute minimizer of $E$ on $\mathcal{M}^\lambda$.

\end{lemma}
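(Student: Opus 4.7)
The plan is to deduce uniqueness from the strict convexity of the functional $E$ on the convex set $\mathcal{M}^\lambda$, which is a standard route but requires one honest verification.

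First I would observe that $\mathcal{M}^\lambda$ is a convex subset of $L^\infty(D)$: the pointwise bounds $0 \le \omega \le \kappa$ and the integral constraint $\int_D \omega\,dx = \lambda$ are both preserved under convex combinations. The linear term $\omega \mapsto \int_D q\omega\,dx$ is obviously convex (affine, in fact), so the whole issue reduces to establishing strict convexity of the quadratic part
\begin{equation*}
Q(\omega) := \frac{1}{2}\int_D\int_D G(x,y)\omega(x)\omega(y)\,dx\,dy.
\end{equation*}
Using the integration by parts already recorded in \eqref{3-5}, one can rewrite $Q(\omega)=\tfrac12\int_D |\nabla G\omega(x)|^2\,dx$. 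Since $\omega \mapsto G\omega$ is linear and the map $u \mapsto \tfrac12\int_D|\nabla u|^2\,dx$ is a positive semidefinite quadratic form, $Q$ is convex; the parallelogram identity gives the precise statement
\begin{equation*}
Q\bigl(t\omega_1+(1-t)\omega_2\bigr) = tQ(\omega_1)+(1-t)Q(\omega_2) - t(1-t)\,\tfrac12\int_D |\nabla G(\omega_1-\omega_2)|^2\,dx
\end{equation*}
for any $t\in(0,1)$ and any $\omega_1,\omega_2\in L^\infty(D)$.

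Next I would show the inequality is strict whenever $\omega_1\neq\omega_2$. If $\omega_1\neq\omega_2$ in $L^\infty(D)$, then $\omega_1-\omega_2\not\equiv 0$, and the unique solvability of the Dirichlet problem \eqref{1-6} forces $G(\omega_1-\omega_2)\not\equiv 0$. Because $G(\omega_1-\omega_2)$ vanishes on $\partial D$, it cannot be a nonzero constant, so $\nabla G(\omega_1-\omega_2)\not\equiv 0$ in $L^2(D)$, and hence the remainder term in the parallelogram identity is strictly positive. Adding the affine term $\int_D q\omega\,dx$ preserves the strict inequality, yielding
\begin{equation*}
E\bigl(t\omega_1+(1-t)\omega_2\bigr) < tE(\omega_1)+(1-t)E(\omega_2)\qquad \text{for all } t\in(0,1),\ \omega_1\neq\omega_2.
\end{equation*}

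Finally, suppose for contradiction that $\omega_1,\omega_2\in\mathcal{M}^\lambda$ are two distinct absolute minimizers, so $E(\omega_1)=E(\omega_2)=c_\lambda$. Then $\tfrac12(\omega_1+\omega_2)\in\mathcal{M}^\lambda$ by convexity, and strict convexity gives $E\bigl(\tfrac12(\omega_1+\omega_2)\bigr) < c_\lambda$, contradicting the definition of $c_\lambda$. I do not anticipate any real obstacle here; the only subtlety worth stressing is the appeal to injectivity of $G$ (equivalently, uniqueness for \eqref{1-6}) to promote convexity to strict convexity, since $Q$ is only semidefinite a priori on general $L^\infty$ functions.
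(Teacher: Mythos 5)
Your proof is correct, but it follows a genuinely different route from the paper's. You establish that $E$ is \emph{strictly} convex on all of $L^\infty(D)$: writing the quadratic part as $\tfrac12\int_D|\nabla G\omega|^2\,dx$, you use injectivity of the Green operator (if $G(\omega_1-\omega_2)\equiv 0$ then $\omega_1-\omega_2=-\Delta G(\omega_1-\omega_2)=0$ a.e., and a nonzero $G(\omega_1-\omega_2)$ vanishing on $\partial D$ cannot have vanishing gradient) to make the parallelogram-identity remainder strictly positive, and then uniqueness on the convex set $\mathcal{M}^\lambda$ is immediate. The paper instead uses only the \emph{non-strict} positivity of the Green quadratic form, $\iint G(x,y)(\omega_1-\omega_2)(x)(\omega_1-\omega_2)(y)\,dx\,dy\ge 0$, to conclude that the midpoint $\omega_3=\tfrac12(\omega_1+\omega_2)$ of two minimizers is again a minimizer, and then invokes the structural characterization of Proposition \ref{3-3} (every minimizer is a patch $\kappa I_{\Omega}$): since $\omega_3$ equals $\kappa/2$ on $\Omega_1^\lambda\bigtriangleup\Omega_2^\lambda$, it can only be a patch if that symmetric difference is null, forcing $\omega_1=\omega_2$. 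Your argument is more self-contained and more general — it needs nothing about the form of minimizers and would give uniqueness on any convex subset of $L^\infty(D)$ (it is essentially the strict-convexity observation the paper records only as a remark after the lemma) — while the paper's proof trades the strict-definiteness step for a shorter computation that leverages the bang-bang structure it has already established.
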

 \begin{proof}
 Suppose there are two minimizers $\omega^\lambda_1,\omega^\lambda_2$ of $E$ on $\mathcal{M}^\lambda$. By Proposition \ref{3-3}, $\omega^\lambda_i=\kappa I_{\Omega^\lambda_i}$ for $i=1,2$. Now we define a new function $\omega^\lambda_3=\frac{1}{2}(\omega^\lambda_1+\omega^\lambda_2),$ or equivalently,
\begin{equation}\label{3-8}
\omega^\lambda_3(x)=
\begin{cases}
\frac{1}{2}\kappa &x\in \Omega^\lambda_1\bigtriangleup\Omega^\lambda_2,\\
\kappa &x\in\Omega^\lambda_1\cap\Omega^\lambda_2,\\
0 &x\in(\Omega^\lambda_1\cup\Omega^\lambda_2)^c.
\end{cases}
\end{equation}
It is easy to see that $\omega^\lambda_3\in\mathcal{M}^\lambda$, and $\omega^\lambda_1=\omega^\lambda_2$ if and only if $\omega^\lambda_3$ has the form $\omega^\lambda_3=\kappa I_{\Omega^\lambda_3}$ for some $\Omega^\lambda_3\subset D$. Now we calculate $E(\omega^\lambda_3),$

\begin{equation}\label{3-9}
  \begin{split}
  E(\omega^\lambda_3)&=\frac{1}{8}\int_D\int_DG(x,y)(\omega^\lambda_1+\omega^\lambda_2)(x)(\omega^\lambda_1+\omega^\lambda_2)(y)dxdy  +\frac{1}{2}\int_Dq(x)(\omega^\lambda_1+\omega^\lambda_2)(x)dx\\
  &=\frac{1}{2}E(\omega^\lambda_1)+\frac{1}{4}\int_D\int_DG(x,y)\omega^\lambda_1(x)\omega^\lambda_2(y)dxdy
  +\frac{1}{4}\int_Dq(x)(\omega^\lambda_1+\omega^\lambda_2)(x)dx\\
  &\leq \frac{1}{2}E(\omega^\lambda_1)+\frac{1}{8}\int_D\int_DG(x,y)\omega^\lambda_1(x)\omega^\lambda_1(y)dxdy
  +\frac{1}{8}\int_D\int_DG(x,y)\omega^\lambda_2(x)\omega^\lambda_2(y)dxdy\\
  &+\frac{1}{4}\int_Dq(x)(\omega^\lambda_1+\omega^\lambda_2)(x)dx\\
  &=E(\omega^\lambda_1).
  \end{split}
\end{equation}
Here we use the fact that $\int_D\int_DG(x,y)(\omega^\lambda(x)-\omega^\lambda_2(x))(\omega^\lambda_1(y)-\omega^\lambda_2(y))dxdy\geq0$. By \eqref{3-9} we know that
$\omega^\lambda_3$ is also a minimizer of $E$, then again by Proposition \ref{3-3} $\omega^\lambda_3=\kappa I_{\Omega^\lambda_3}$ for some $\Omega^\lambda_3\subset D$, which implies $\omega^\lambda_1=\omega^\lambda_2.$

\end{proof}

\begin{remark}
$\mathcal{M}^\lambda$ is in fact a convex subset of $L^\infty(D)$ and $E$ is a strictly convex functional on $\mathcal{M}^\lambda$.
\end{remark}

\subsection{Limiting behavior of the minimizer}
Now we analyze the limiting behavior of the minimizer $\omega^\lambda$ obtained in Proposition \ref{3-3} as $\lambda\rightarrow 0^+$, which will also be used in the next subsection. For simplicity, we will use $C$ to denote various positive numbers not depending on $\lambda$.

\begin{lemma}\label{3-10}
We have the following upper bound for $E(\omega^\lambda)$
\begin{equation}
E(\omega^\lambda)\leq \lambda\min_{\overline{D}}q+C\lambda^{\frac{3}{2}}.
\end{equation}
\end{lemma}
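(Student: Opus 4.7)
The plan is to prove the upper bound by constructing an explicit test function $\tilde\omega \in \mathcal{M}^\lambda$ supported on a small set near a minimum point of $q$, and then invoking the minimality $E(\omega^\lambda) \leq E(\tilde\omega)$.

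First, I would pick a point $x_0 \in \overline{D}$ with $q(x_0) = \min_{\overline{D}} q$. For small $\lambda > 0$, I choose a radius $r = r(\lambda)$ such that $A_\lambda := B_r(x_0) \cap D$ has measure exactly $\lambda/\kappa$. Since $\partial D$ is smooth, in both cases $x_0 \in D$ and $x_0 \in \partial D$ one has $|A_\lambda| \sim c r^2$ for some $c > 0$ as $r \to 0^+$, hence $r = O(\sqrt{\lambda})$. Set $\tilde\omega := \kappa I_{A_\lambda}$, so that $\tilde\omega \in \mathcal{M}^\lambda$.

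Next I would estimate the two pieces of $E(\tilde\omega)$ separately. For the linear term, using $q \in C^1(\overline{D})$ and $|x - x_0| \leq r$ on $A_\lambda$,
\begin{equation*}
\int_D q(x)\tilde\omega(x)\,dx = \kappa \int_{A_\lambda} q(x)\,dx \leq \kappa |A_\lambda| q(x_0) + \kappa \|\nabla q\|_{L^\infty} r |A_\lambda| = \lambda \min_{\overline{D}} q + O(\lambda^{3/2}).
\end{equation*}
For the quadratic term, I use $G(x,y) \leq \frac{1}{2\pi}\log\frac{1}{|x-y|} + C$ and the standard estimate $\int_{B_r(x)} \log\frac{1}{|x-y|}\,dy \leq C r^2(1 + |\log r|)$, which yields
\begin{equation*}
\frac{1}{2}\int_D\int_D G(x,y)\tilde\omega(x)\tilde\omega(y)\,dx\,dy \leq C\kappa^2 |A_\lambda| \cdot r^2(1+|\log r|) \leq C \lambda^2 (1+|\log\lambda|),
\end{equation*}
which is $o(\lambda^{3/2})$ as $\lambda \to 0^+$.

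Combining the two estimates gives $E(\tilde\omega) \leq \lambda \min_{\overline{D}} q + C\lambda^{3/2}$ for all sufficiently small $\lambda$, and Proposition \ref{3-3} yields $E(\omega^\lambda) \leq E(\tilde\omega)$, completing the proof. The only mildly delicate point is handling the case $x_0 \in \partial D$, where one must verify that for the smooth boundary $\partial D$ the measure $|B_r(x_0)\cap D|$ still behaves like $r^2$ up to a constant, so the relation $r = O(\sqrt{\lambda})$ persists; the rest is elementary.
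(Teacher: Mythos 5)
Your proposal is correct and follows essentially the same route as the paper: choose a test patch of measure $\lambda/\kappa$ concentrated near a minimum point $x_0$ of $q$ on $\overline{D}$, use the minimality $E(\omega^\lambda)\leq E(\tilde\omega)$, and estimate the linear term by the Lipschitz bound $|q(x)-q(x_0)|\leq C\sqrt{\lambda}$ and the quadratic term separately. The only cosmetic differences are that the paper takes a full ball $B_\varepsilon(x^\lambda)\subset D$ tangent at $x_0$ via the interior sphere condition and bounds the quadratic term through $\|G\upsilon\|_{L^\infty(D)}\leq C\lambda^{1/2}$ ($L^p$ estimates plus Sobolev embedding), whereas you use $B_r(x_0)\cap D$ and the logarithmic bound on the Green's function, which even gives the sharper $O\bigl(\lambda^2(1+|\log\lambda|)\bigr)$ for that term.
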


\begin{proof}
The basic idea is to choose a suitable test function. Let $x_0\in\partial D$ be a minimum point of $q$ on $\overline{D}$. Since $\partial D$ is smooth, $D$ satisfies the interior sphere condition at $x_0\in \partial D$. Therefore for $\lambda$ sufficiently small we can choose a ball $B_\varepsilon(x^\lambda)\subset D$ with $|x^\lambda-x_0|=\varepsilon$, where $\varepsilon$ satisfies $\kappa\pi\varepsilon^2=\lambda$. Now we define the test function to be $\upsilon^\lambda=\kappa I_{B_\varepsilon(x^\lambda)}.$ It is obvious that $\upsilon^\lambda\in \mathcal{M}^\lambda$, and
\begin{equation}\label{3-11}
\begin{split}
  E(\omega^\lambda)&\leq E(\upsilon^\lambda)=\frac{1}{2}\kappa\int_{B_\varepsilon(x^\lambda)}G\upsilon(x)dx+\kappa\int_{B_\varepsilon(x^\lambda)}q(x)dx.
  \end{split}
\end{equation}
To estimate the first integral in \eqref{3-11}, we use $L^p$ estimate and Sobolev embedding to obtain
\begin{equation}\label{3-12}
|G\upsilon|_{L^\infty(D)}\leq C|G\upsilon|_{W^{2,2}(D)}\leq C|\upsilon|_{L^2(D)}=C\lambda^\frac{1}{2}.
\end{equation}
To estimate the second integral in \eqref{3-11}, we observe that for any $x\in B_\varepsilon(x^\lambda)$
\begin{equation}\label{3-13}
|q(x)-q(x_0)|\leq |\nabla q|_{L^\infty(D)}|x-x_0|\leq C\varepsilon.
\end{equation}
By combining \eqref{3-11}, \eqref{3-12} and \eqref{3-13} together we get
\begin{equation}\label{3-14}
E(\omega^\lambda)\leq \lambda\min_{\overline{D}}q+C\lambda^{\frac{3}{2}}.
\end{equation}

\end{proof}

\begin{lemma}\label{3-15}
The following estimate holds
\begin{equation}\label{3-16}
\int_D(G\omega^\lambda(x)+q(x)-\mu^\lambda)\omega^\lambda(x)dx\geq -C\lambda^{\frac{3}{2}}.
\end{equation}

\end{lemma}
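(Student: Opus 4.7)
The plan is to reduce the desired lower bound to an upper bound on the Lagrange multiplier $\mu^\lambda$, and then prove the latter by a measure-theoretic pigeonhole argument near a minimum point of $q$ on $\overline{D}$.

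First, I would exploit positivity of the Dirichlet Green's function: since $G\geq 0$ on $D\times D$ and $\omega^\lambda\geq 0$, we have $G\omega^\lambda\geq 0$ in $D$, so $G\omega^\lambda+q\geq \min_{\overline{D}}q$ pointwise. Multiplying by $\omega^\lambda\geq 0$ and integrating, and using $\int_D\omega^\lambda\,dx=\lambda$, gives
\[
\int_D(G\omega^\lambda+q-\mu^\lambda)\omega^\lambda\,dx\geq \lambda\bigl(\min_{\overline{D}}q-\mu^\lambda\bigr),
\]
so the lemma reduces to the one-sided estimate $\mu^\lambda\leq \min_{\overline{D}}q+C\lambda^{1/2}$.

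To prove this, I would use the characterization $\mu^\lambda=\inf_{\{x\in D\,:\,\omega^\lambda(x)<\kappa\}}(G\omega^\lambda+q)$ established in Proposition \ref{3-3}, so that it suffices to exhibit a single test point $y_\lambda\in D$ at which $\omega^\lambda(y_\lambda)<\kappa$ and both $q(y_\lambda)-\min_{\overline{D}}q$ and $G\omega^\lambda(y_\lambda)$ are of size $O(\lambda^{1/2})$. Fix any minimizer $x_0\in\overline{D}$ of $q$. Because $\partial D$ is smooth, the interior sphere condition yields a constant $c>0$ independent of $\lambda$ with $|B_r(x_0)\cap D|\geq cr^2$ for all sufficiently small $r$. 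Choosing $r=C_1\lambda^{1/2}$ with $C_1$ large enough that $cC_1^2>1/\kappa$, we get $|B_r(x_0)\cap D|>\lambda/\kappa=|\Omega^\lambda|$, so $B_r(x_0)\cap D$ cannot be contained in $\Omega^\lambda$ and must contain some point $y_\lambda$ with $\omega^\lambda(y_\lambda)<\kappa$. Combining with the bound $\|G\omega^\lambda\|_{L^\infty(D)}\leq C\lambda^{1/2}$, which follows from the same $L^p$-plus-Sobolev argument as in \eqref{3-12} applied to $\omega^\lambda$ in place of $\upsilon$, I obtain
\[
\mu^\lambda\leq G\omega^\lambda(y_\lambda)+q(y_\lambda)\leq \|G\omega^\lambda\|_{L^\infty}+q(x_0)+\|\nabla q\|_{L^\infty}\,r\leq \min_{\overline{D}}q+C\lambda^{1/2},
\]
which, combined with the first display, yields $\int_D(G\omega^\lambda+q-\mu^\lambda)\omega^\lambda\,dx\geq -C\lambda^{3/2}$.

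The main obstacle is the upper bound on $\mu^\lambda$. A priori $\Omega^\lambda$ could sit anywhere in $D$, and I cannot yet invoke concentration of $\omega^\lambda$ near the minimum set of $q$ — that is precisely what this lemma will be used to prove in the next subsection. The pigeonhole step sidesteps this potential circularity by using only the smallness $|\Omega^\lambda|=\lambda/\kappa$ together with the smoothness of $\partial D$: a ball of radius $O(\lambda^{1/2})$ around any fixed minimum point of $q$ has area larger than $|\Omega^\lambda|$ and therefore must contain at least one point where $\omega^\lambda<\kappa$, at which both $G\omega^\lambda$ and the excursion $q-\min_{\overline{D}}q$ are automatically controlled by $C\lambda^{1/2}$.
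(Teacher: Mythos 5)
Your proposal is correct, but it takes a genuinely different route from the paper. You reduce Lemma \ref{3-15} to the one-sided multiplier bound $\mu^\lambda\leq\min_{\overline D}q+C\lambda^{1/2}$ (using $G\omega^\lambda\geq0$ and $\int_D\omega^\lambda=\lambda$) and then prove that bound directly by a pigeonhole argument: a ball of radius $C_1\lambda^{1/2}$ about a minimum point of $q$ has measure exceeding $|\Omega^\lambda|=\lambda/\kappa$, hence meets $\{\omega^\lambda<\kappa\}$ in a set of positive measure, where $\mu^\lambda\leq G\omega^\lambda+q\leq\min_{\overline D}q+C\lambda^{1/2}$ by the characterization of $\mu^\lambda$ in Proposition \ref{3-3} and the bound $|G\omega^\lambda|_{L^\infty}\leq C\lambda^{1/2}$. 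The paper instead proves \eqref{3-16} intrinsically, with no information about $\mu^\lambda$ or the location of $\Omega^\lambda$: it applies H\"older and the Sobolev embedding $W^{1,1}\hookrightarrow L^2$ to the negative part $\varphi^\lambda_-$ of $\varphi^\lambda=G\omega^\lambda+q-\mu^\lambda$, which is supported on the small-measure set $\Omega^\lambda$, absorbs the zero-order term for $\lambda$ small, and closes with the $L^\infty$ bound on $\nabla\varphi^\lambda$ as in \eqref{3-21}. Your route is more elementary and in fact delivers the conclusion of Lemma \ref{3-100} directly, bypassing the energy upper bound of Lemma \ref{3-10} and the identity \eqref{3-22}; note, however, that it leans on $\omega^\lambda\geq0$, $G\geq0$ and the boundary density estimate at a minimum point, whereas the paper's sign-independent, purely local argument is exactly the template reused for the multi-patch case in Lemma \ref{5-106}, where each $U^\lambda_p$ is confined to $B_{\delta_0}(x_p)$ and both signs of vorticity occur. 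Two minor points to tidy up: since $\omega^\lambda$ is only defined a.e., your evaluation at the single point $y_\lambda$ should be phrased as an infimum over the positive-measure set $\bigl(B_r(x_0)\cap D\bigr)\setminus\Omega^\lambda$ (harmless, as $G\omega^\lambda+q$ is continuous), and, like the paper's absorption step after \eqref{3-18}, your choice $r=C_1\lambda^{1/2}$ small enough for the interior density estimate implicitly restricts to $\lambda$ small, which is the regime in which the lemma is used.
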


\begin{proof}
For convenience we denote $\varphi^\lambda=G\omega^\lambda+q-\mu^\lambda$ and $\varphi^\lambda_-=\min{\{\varphi,0\}}$. First by H\"older's inequality
\begin{equation}\label{3-17}
\int_D(G\omega^\lambda(x)+q(x)-\mu^\lambda)\omega^\lambda(x)dx=\kappa\int_{\Omega^\lambda}\varphi(x)dx\geq
-\kappa|\Omega^\lambda|^{\frac{1}{2}}(\int_{\Omega^\lambda}|\varphi^\lambda(x)|^2dx)^{\frac{1}{2}}.
\end{equation}

On the other hand, by the Sobolev embedding $W^{1,1}(D)\hookrightarrow L^2(D)$ and H\"older's inequality
\begin{equation}\label{3-18}
\begin{split}
   (\int_{\Omega^\lambda}|\varphi^\lambda(x)|^2dx)^{\frac{1}{2}}&= (\int_{D}|\varphi_-^\lambda(x)|^2dx)^{\frac{1}{2}}\\
   &\leq C(\int_D|\varphi^\lambda_-(x)|dx+\int_D|\nabla\varphi^\lambda_-(x)|dx)\\
   &=C(\int_{\Omega^\lambda}|\varphi^\lambda(x)|dx+\int_{\Omega^\lambda}|\nabla\varphi^\lambda(x)|dx)\\
   &\leq C|\Omega^\lambda|^{\frac{1}{2}}(\int_{\Omega^\lambda}|\varphi^\lambda(x)|^2dx)^{\frac{1}{2}}
   +C|\Omega^\lambda|^{\frac{1}{2}}(\int_{\Omega^\lambda}|\nabla\varphi^\lambda(x)|^2dx)^{\frac{1}{2}}.
   \end{split}
\end{equation}
Since $|\Omega^\lambda|=\lambda/\kappa\rightarrow0$ as $\lambda\rightarrow0^+$, we get from \eqref{3-18}
\begin{equation}\label{3-19}
(\int_{\Omega^\lambda}|\varphi^\lambda(x)|^2dx)^{\frac{1}{2}}\leq C|\Omega^\lambda|^{\frac{1}{2}}(\int_{\Omega^\lambda}|\nabla\varphi^\lambda(x)|^2dx)^{\frac{1}{2}}.
\end{equation}
Combining \eqref{3-17} and \eqref{3-19} we obtain
\begin{equation}\label{3-20}
\int_D(G\omega^\lambda(x)+q(x)-\mu^\lambda)\omega^\lambda(x)dx\geq -C|\Omega^\lambda|(\int_{\Omega^\lambda}|\nabla\varphi^\lambda(x)|^2dx)^{\frac{1}{2}}.
\end{equation}
Notice that by $L^p$ estimate
\begin{equation}\label{3-21}
\begin{split}
\int_{\Omega^\lambda}|\nabla\varphi^\lambda(x)|^2dx&\leq2(\int_{\Omega^\lambda}|\nabla G\omega^\lambda(x)|^2dx+\int_{\Omega^\lambda}|\nabla q(x)|^2dx)\\
&\leq 2(|\nabla G\omega^\lambda|^2_{L^\infty(D)}|\Omega^\lambda|+|\nabla q|_{L^\infty(D)}^2|\Omega^\lambda|)\\
&\leq C\lambda|\nabla G\omega^\lambda|^2_{W^{1,3}(D)}+2\lambda|\nabla q|_{L^\infty(D)}^2|\\
&\leq C\lambda(1+|\omega^\lambda|^2_{L^3(D)}|)\\
&\leq C\lambda(1+\lambda^\frac{2}{3})\\
&\leq C\lambda.
\end{split}
\end{equation}

 \eqref{3-20} and \eqref{3-21} together give the desired result.

\end{proof}

Now we are able to conclude the following crucial estimate for the Lagrange multiplier $\mu^\lambda$.

\begin{lemma}\label{3-100}
\begin{equation}
\min_{\overline{D}}q<\mu^\lambda\leq\min_{\overline{D}}q+C\lambda^{\frac{1}{2}}.
\end{equation}
\end{lemma}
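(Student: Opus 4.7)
The plan is to treat the two inequalities separately, combining the two previous lemmas for the upper bound and using the defining set-structure of $\Omega^\lambda$ together with positivity of the Green's operator for the lower bound. Throughout, I will use $C$ for constants independent of $\lambda$.

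\textbf{Upper bound.} The idea is to re-express the integral appearing in Lemma \ref{3-15} in terms of $E(\omega^\lambda)$, $\int_D q\omega^\lambda$, and $\mu^\lambda \lambda$. By symmetry of $G$ and the definition \eqref{3-2},
\[
\int_D\bigl(G\omega^\lambda+q-\mu^\lambda\bigr)\omega^\lambda\,dx \;=\; 2E(\omega^\lambda) - \int_D q\,\omega^\lambda\,dx - \mu^\lambda\lambda,
\]
since $\int_D G\omega^\lambda\cdot\omega^\lambda\,dx = 2E(\omega^\lambda) - 2\int_D q\,\omega^\lambda\,dx$ and $\int_D\omega^\lambda\,dx=\lambda$. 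Lemma \ref{3-15} then rearranges to
\[
\mu^\lambda\lambda \;\leq\; 2E(\omega^\lambda) - \int_D q\,\omega^\lambda\,dx + C\lambda^{3/2}.
\]
Now plug in Lemma \ref{3-10}, which gives $2E(\omega^\lambda)\leq 2\lambda\min_{\overline D}q + C\lambda^{3/2}$, together with the trivial pointwise bound $\int_D q\,\omega^\lambda\,dx\geq \lambda\min_{\overline D}q$ (a consequence of $\omega^\lambda\geq 0$ with mass $\lambda$). This yields $\mu^\lambda\lambda\leq \lambda\min_{\overline D}q + C\lambda^{3/2}$, and dividing by $\lambda$ produces the claimed $O(\lambda^{1/2})$ error term.

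\textbf{Lower bound.} Since $|\Omega^\lambda|=\lambda/\kappa>0$, the set $\Omega^\lambda\subset D$ is non-empty; fix any $x_0\in\Omega^\lambda$. As $\omega^\lambda\geq 0$ is not identically zero on $D$, the strong maximum principle applied to $-\Delta(G\omega^\lambda)=\omega^\lambda\geq 0$ with zero Dirichlet data forces $G\omega^\lambda>0$ throughout the interior $D$; in particular $G\omega^\lambda(x_0)>0$. The defining inequality $G\omega^\lambda(x_0)+q(x_0)<\mu^\lambda$ of $\Omega^\lambda$ therefore gives $q(x_0)<\mu^\lambda - G\omega^\lambda(x_0)<\mu^\lambda$. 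Since $q(x_0)\geq\min_{\overline D}q$, we conclude $\min_{\overline D}q\leq q(x_0)<\mu^\lambda$, which is the strict inequality desired.

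\textbf{Main difficulty.} There is no genuinely new estimate needed here: the real content is already packed into Lemmas \ref{3-10} and \ref{3-15}. The only point requiring a small amount of care is bookkeeping the $\lambda$ exponents so that after dividing by $\lambda$ one indeed obtains a $\lambda^{1/2}$ remainder rather than something weaker; this is why one must combine the two $O(\lambda^{3/2})$ estimates with the trivial lower bound $\int_D q\,\omega^\lambda\geq \lambda\min_{\overline D}q$ instead of, say, directly inserting $E(\omega^\lambda)$ into a single inequality. For the strict lower bound, the subtle ingredient is recognizing that it is the strong maximum principle (rather than just $G>0$ off the diagonal, which would only give $\geq$) that provides the strict sign $G\omega^\lambda(x_0)>0$.
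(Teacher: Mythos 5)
Your proof is correct and follows essentially the same route as the paper: the same energy identity rearranged to isolate $\mu^\lambda\lambda$, combined with Lemmas \ref{3-10} and \ref{3-15} for the upper bound, and a point of the non-empty set $\Omega^\lambda$ together with nonnegativity of $G\omega^\lambda$ for the lower bound. The only differences are bookkeeping: you substitute $\tfrac12\int_D G\omega^\lambda\,\omega^\lambda\,dx = E(\omega^\lambda)-\int_D q\,\omega^\lambda\,dx$ and use $\int_D q\,\omega^\lambda\,dx\ge\lambda\min_{\overline D}q$ where the paper instead invokes $|G\omega^\lambda|_{L^\infty(D)}\le C\lambda^{1/2}$, and your appeal to the \emph{strong} maximum principle is unnecessary, since $G\omega^\lambda\ge 0$ plus the strict inequality defining $\Omega^\lambda$ already yields $\min_{\overline D}q\le q(x_0)<\mu^\lambda$.
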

\begin{proof}
It is easy to see that the following identity holds
\begin{equation}\label{3-22}
  E(\omega^\lambda)=-\frac{1}{2}\int_DG\omega^\lambda(x)\omega^\lambda(x)dx+\int_D(G\omega^\lambda(x)+q(x)-\mu^\lambda)\omega^\lambda(x)dx+\lambda\mu^\lambda.
\end{equation}
Since $|G\omega^\lambda|_{L^\infty(D)}\leq C\lambda^\frac{1}{2}$, combining Lemma \ref{3-10} and Lemma \ref{3-15} we obtain
\begin{equation}\label{3-23}
  \mu^\lambda\leq\min_{\overline{D}}q+C\lambda^{\frac{1}{2}}.
\end{equation}
On the other hand, since $\Omega^\lambda$ is not empty, we can choose $x\in\Omega^\lambda$, then
\begin{equation}\label{3-24}
\mu^\lambda>G\omega^\lambda(x)+q(x)\geq q(x)\geq \min_{\overline{D}}q.
\end{equation}
Here we use the fact $G\omega^\lambda\geq0$ in $D$ by the maximum principle.

\end{proof}

Now we are ready to give the limiting behavior of the minimizer $\omega^\lambda$, which is equivalent to the limiting behavior of $\Omega^\lambda$ as $\lambda\rightarrow0^+.$
\begin{lemma}\label{3-200}

\[\lim_{\lambda\rightarrow0^+}\sup_{x\in \Omega^\lambda}|q(x)-\min_{\overline{D}}q|=0.\]

\end{lemma}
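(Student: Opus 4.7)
The statement is in fact an immediate consequence of the Lagrange multiplier estimate in Lemma \ref{3-100} together with the defining inequality for $\Omega^\lambda$ and the sign of $G\omega^\lambda$. My plan is therefore a short chain of elementary inequalities rather than any new argument.

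First I would fix any $x\in\Omega^\lambda$ and use the structural description from Proposition \ref{3-3}, which gives
\[
G\omega^\lambda(x)+q(x)<\mu^\lambda.
\]
Next I would invoke the maximum principle to note that $G\omega^\lambda\geq 0$ in $D$ (since $\omega^\lambda\geq 0$ and the Dirichlet Green's function is nonnegative), so the previous inequality yields $q(x)<\mu^\lambda$. Combining this with the trivial lower bound $q(x)\geq\min_{\overline{D}}q$ for every $x\in\overline{D}$, one obtains
\[
0\leq q(x)-\min_{\overline{D}}q<\mu^\lambda-\min_{\overline{D}}q.
\]

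Now I would apply Lemma \ref{3-100}, which gives $\mu^\lambda-\min_{\overline{D}}q\leq C\lambda^{1/2}$. Taking the supremum over $x\in\Omega^\lambda$ yields
\[
\sup_{x\in\Omega^\lambda}\bigl|q(x)-\min_{\overline{D}}q\bigr|\leq C\lambda^{1/2},
\]
and letting $\lambda\to 0^+$ finishes the proof.

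There is essentially no obstacle at this stage, because all the difficulty has been concentrated into Lemma \ref{3-100}: the $\mathcal{O}(\lambda^{1/2})$ control of the Lagrange multiplier above $\min_{\overline{D}}q$ is precisely what forces the set where $G\omega^\lambda+q<\mu^\lambda$ to be squeezed into the sublevel set $\{q<\min_{\overline{D}}q+C\lambda^{1/2}\}$. The only care needed is the observation that $G\omega^\lambda\geq 0$, which enters via the maximum principle applied to $-\Delta(G\omega^\lambda)=\omega^\lambda\geq 0$ with zero Dirichlet data.
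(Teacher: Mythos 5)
Your proposal is correct and follows exactly the paper's own route: the paper's proof simply combines the inequality $\mu^\lambda>G\omega^\lambda(x)+q(x)\geq q(x)\geq\min_{\overline{D}}q$ (valid for every $x\in\Omega^\lambda$, using $G\omega^\lambda\geq0$ by the maximum principle) with the bound $\mu^\lambda\leq\min_{\overline{D}}q+C\lambda^{1/2}$ from Lemma \ref{3-100}, which is precisely your chain of estimates.
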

\begin{proof}
Notice that \eqref{3-24} holds for any $x\in \Omega^\lambda$. Combining Lemma \ref{3-100} we get the desired result.

\end{proof}

\subsection{Proof of Theorem \ref{1-11}}
Now we are ready to prove Theorem \ref{1-11}.
\begin{proof}[Proof of Theorem \ref{1-11}]
Let $\omega^\lambda$ be the unique minimizer obtained in Proposition \ref{3-3}. First we show that $\omega^\lambda$ is a weak solution of \eqref{1-7}. For any $\xi\in C^{\infty}_c(D)$ and $x\in D$, we consider the following ordinary differential equation
\begin{equation}\label{4-1}
\begin{cases}\frac{d\Phi_t(x)}{dt}=\nabla^\perp\xi(\Phi_t(x)) &t\in\mathbb R, \\
\Phi_0(x)=x.
\end{cases}
\end{equation}
Since $\nabla^\perp\xi$ is a smooth vector field with compact support, $\eqref{4-1}$ has a global solution. It is easy to check that $\nabla^\perp\xi$ is divergence-free, so $\Phi_t$ is an area-preserving transformation from $D$ to $D$, that is, for any measurable set $A\subset D$, we have $|\{\Phi_t(x)\mid x\in A\}|=|A|$. Let $\{\omega_t\}_{t\in\mathbb R}$ be a family of test functions defined by
\begin{equation}
\omega_t(x):=\omega^\lambda(\Phi_t(x)).
\end{equation}
It is obvious that $\omega_t\in \mathcal{M}^\lambda$, so $\frac{dE(\omega_t)}{dt}|_{t=0}=0$.
 Expanding $E(\omega_t)$ at $t=0$ we obtain for $|t|$ small
\[\begin{split}
E(\omega_t)=&\frac{1}{2}\int_D\int_DG(x,y)\omega^\lambda(\Phi_t(x))\omega^\lambda(\Phi_t(y))dxdy+\int_Dq(x)\omega^\lambda(\Phi_t(x))dx\\
=&\frac{1}{2}\int_D\int_DG(\Phi_{-t}(x),\Phi_{-t}(y))\omega^\lambda(x)\omega^\lambda(y)dxdy\int_Dq(\Phi_{-t}(x))\omega^\lambda(x)dx\\
=&E(\omega^\lambda)+t\int_D\omega^\lambda\nabla^\perp(G\omega^\lambda+q)\cdot\nabla\xi dx+o(t).
\end{split}\]
 Therefore we get
\[\int_D\omega^\lambda\nabla^\perp(G\omega^\lambda+q)\cdot\nabla\xi dx=0.\]

Note that \eqref{1-12} has been verified in the construction of $\omega^\lambda$ in Section 3.
Now we prove \eqref{1-14} by contradiction.

Suppose that there exist $\lambda_j>0, x_j\in\Omega^{\lambda_j}$ for $j=1,2,\cdot\cdot\cdot,$ such that $\lambda\rightarrow0^+$ as $j\rightarrow+\infty$ and $x_j\notin \mathcal{S}_{\delta_0}$ for some $\delta_0>0$. By the continuity of $q$, we have
\begin{equation}\label{4-2}
\inf_j{q(x_j)}>\min_{\overline{D}}q.
\end{equation}
On the other hand, by Lemma \ref{3-200} we have
\begin{equation}\label{4-3}
  \lim_{j\rightarrow+\infty}q(x_j)=\min_{\overline{D}}q,
\end{equation}
which is a contradiction.

\end{proof}

\section{Proof of Theorem \ref{1-15}}

To prove Theorem \ref{1-15}, we consider a similar variational problem.

Let $\delta_0$ be chosen in Theorem \ref{1-15}. For $\lambda>0$ sufficiently small, define
\begin{equation}\label{5-1}
\begin{split}
 \mathcal{N}^\lambda:=&\{\omega\in L^\infty(D)\mid \omega=\sum_{p=1}^{k+1}\omega_p, supp(\omega_p)\subset B_{\delta_0}(x_p) \text{ for } p=1,\cdot\cdot\cdot k+l, \\
 &\int_D\omega_i(x)dx=\lambda \text{ and } 0\leq\omega_i\leq\kappa_i \text{ for } i=1,\cdot\cdot\cdot,k,\\
  &\int_D\omega_j(x)dx=-\lambda \text{ and } -\kappa_j\leq\omega_j\leq0 \text{ for } j=k+1,\cdot\cdot\cdot,k+l\}.
 \end{split}
\end{equation}
The energy functional on $\mathcal{N}^\lambda$ is still defined by
\begin{equation}\label{5-2-1}
  E(\omega)=\frac{1}{2}\int_D\int_DG(x,y)\omega(x)\omega(y)dxdy+\int_Dq(x)\omega(x)dx.
\end{equation}
We consider the minimization of $E$ on $\mathcal{N}^\lambda$, that is,
\begin{equation}
c^*_\lambda=\inf\{E(\omega)\mid\omega\in\mathcal{N}^\lambda\}.
\end{equation}

\subsection{Existence of a minimizer}
As in Section 3, we first establish the following result.
\begin{lemma}
$\mathcal{N}^\lambda$ is a sequentially compact set in $L^\infty(D)$.
\end{lemma}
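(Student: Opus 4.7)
The plan is to mirror the argument for $\mathcal{M}^\lambda$ given earlier, using that $\mathcal{N}^\lambda$ is bounded in $L^\infty(D)$ (every $\omega\in\mathcal{N}^\lambda$ satisfies $\|\omega\|_{L^\infty(D)}\le\max_p\kappa_p$) so the Banach--Alaoglu theorem immediately supplies a subsequence $\omega_{n_j}\rightharpoonup^* \omega_0$ in $L^\infty(D)$ for some $\omega_0\in L^\infty(D)$. The entire content of the lemma is then to verify $\omega_0\in\mathcal{N}^\lambda$, i.e.\ that the decomposition, the mass constraints, and the pointwise bounds survive passage to the weak star limit.

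The convenient observation is that, by the choice of $\delta_0$ in Theorem \ref{1-15}, the sets $\overline{B_{\delta_0}(x_p)\cap D}$ are pairwise disjoint. Consequently, the decomposition $\omega_n=\sum_{p=1}^{k+l}\omega_{n,p}$ required in the definition of $\mathcal{N}^\lambda$ is forced to be $\omega_{n,p}=\omega_n\,I_{B_{\delta_0}(x_p)\cap D}$. I would define $\omega_{0,p}:=\omega_0\,I_{B_{\delta_0}(x_p)\cap D}$ and check that $\omega_{n_j,p}\rightharpoonup^* \omega_{0,p}$ in $L^\infty(D)$: for any $\zeta\in L^1(D)$ one tests against $\zeta\,I_{B_{\delta_0}(x_p)\cap D}\in L^1(D)$ in the weak star convergence of $\omega_{n_j}$ to $\omega_0$. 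This also guarantees $\mathrm{supp}(\omega_{0,p})\subset\overline{B_{\delta_0}(x_p)\cap D}$ and that $\omega_0=\sum_{p=1}^{k+l}\omega_{0,p}$.

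The mass constraints $\int_D\omega_{0,i}\,dx=\lambda$ for $1\le i\le k$ and $\int_D\omega_{0,j}\,dx=-\lambda$ for $k+1\le j\le k+l$ come from testing against $\zeta=I_{B_{\delta_0}(x_p)\cap D}\in L^1(D)$. For the pointwise bounds $0\le\omega_{0,i}\le\kappa_i$ and $-\kappa_j\le\omega_{0,j}\le 0$ I would repeat verbatim the contradiction argument used for $\mathcal{M}^\lambda$: assuming e.g.\ $|\{x\in D:\omega_{0,i}(x)>\kappa_i\}|>0$ yields some $\varepsilon_0>0$ with $A:=\{x\in D:\omega_{0,i}(x)\ge\kappa_i+\varepsilon_0\}$ of positive measure, and testing against $\zeta=I_A$ in the weak star convergence of $\omega_{n_j,i}$ to $\omega_{0,i}$ produces
\[
0=\lim_{j\to\infty}\int_A(\omega_{0,i}-\omega_{n_j,i})\,dx\ge \varepsilon_0|A|>0,
\]
a contradiction; the other three inequalities are handled the same way, with the sign of the test function and of $\varepsilon_0$ adjusted for the negative components.

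There is no real obstacle here: the only point requiring care is the decomposition step, where one must invoke the disjointness of the localizing balls to identify the components $\omega_{n,p}$ unambiguously and to pass each of them, separately, to the weak star limit. Once that is in hand, the verification of the constraints defining $\mathcal{N}^\lambda$ is a direct transcription of the proof already given for $\mathcal{M}^\lambda$.
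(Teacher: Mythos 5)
Your proof is correct and follows essentially the same route as the paper: extract a weak star convergent subsequence by boundedness, localize to the (disjoint) balls $B_{\delta_0}(x_p)$ so that each component $\omega_n I_{B_{\delta_0}(x_p)}$ converges weakly star to $\omega_0 I_{B_{\delta_0}(x_p)}$, and then repeat the $\mathcal{M}^\lambda$ argument to pass the mass constraints and pointwise bounds to the limit. You merely spell out details the paper leaves implicit, so no further comment is needed.
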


\begin{proof}
Let $\{\omega_n\}$ be a sequence in $L^\infty(D)$ and $\omega_n\rightarrow\omega_0\in L^\infty(D)$ weakly star as $n\rightarrow+\infty$. It suffices to show $\omega_0\in\mathcal{N}^\lambda$. By the definition of weak star convergence it is easy to check that for each $p$, $1\leq p\leq k+l,$  $\omega_n I_{B_{\delta_0}(x_p)}\rightarrow\omega_0I_{B_{\delta_0}(x_p)}$ weakly star in $L^\infty(D)$. Then we repeat the argument in Lemma \ref{3-344} to obtain
\[0\leq\omega_0I_{B_{\delta_0}(x_i)}\leq\kappa_i, \int_D\omega_0I_{B_{\delta_0}(x_i)}(x)dx=\lambda, \text{ for }i=1,\cdot\cdot\cdot,k,\]
\[-\kappa_j\leq\omega_0I_{B_{\delta_0}(x_j)}\leq0, \int_D\omega_0I_{B_{\delta_0}(x_j)}(x)dx=-\lambda, \text{ for }j=1,\cdot\cdot\cdot,k.\]
Therefore $\omega_0=\sum_{p=1}^{k+l}\omega_n I_{B_{\delta_0}(x_p)}\in\mathcal{N}^\lambda.$
\end{proof}

\begin{proposition}\label{5-2}
$c^*_\lambda$ can be achieved by an absolute minimizer $w^\lambda\in \mathcal{N}^\lambda$ with the following the form
\begin{equation}\label{5-3}
  w^\lambda=\sum_{i=1}^k\kappa_iI_{U_i^\lambda}-\sum_{j=k+1}^{k+l}\kappa_jI_{U_j^\lambda},
\end{equation}
where for each $1\leq i\leq k$
\begin{equation}\label{5-4}
U_i^\lambda =\{x\in D\mid Gw^\lambda(x)+q(x)<\nu_i^\lambda\}\cap{B_{\delta_0}(x_i)},\,\,|U_i^\lambda|=\lambda,
\end{equation}
and for each $k+1\leq j\leq k+l$
\begin{equation}\label{5-5}
U_j^\lambda =\{x\in D\mid Gw^\lambda(x)+q(x)>\nu_j^\lambda\}\cap{B_{\delta_0}(x_j)},\,\,|U_j^\lambda|=\lambda.
\end{equation}
Here $\nu_p^\lambda\in\mathbb{R}, 1\leq p\leq k+l$, is the Lagrange multiplier depending on $\lambda$.
\end{proposition}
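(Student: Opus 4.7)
The plan is to mirror the single-patch argument of Proposition \ref{3-3}, exploiting that the disjointness hypothesis $\overline{B_{\delta_0}(x_{p_1})\cap D}\cap\overline{B_{\delta_0}(x_{p_2})\cap D}=\varnothing$ for $p_1\neq p_2$ lets me perturb the $k+l$ localized pieces $\omega_p:=\omega\,I_{B_{\delta_0}(x_p)}$ of an element $\omega\in\mathcal{N}^\lambda$ independently without affecting the integral constraints on the remaining components.

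Existence of a minimizer follows the familiar pattern. The functional $E$ is bounded below on $\mathcal{N}^\lambda$ since
\[
E(\omega)=\tfrac{1}{2}\int_D|\nabla G\omega(x)|^2\,dx+\int_D q(x)\omega(x)\,dx\geq -\|q\|_{L^\infty(D)}\sum_{p=1}^{k+l}\kappa_p\lambda.
\]
By the preceding sequential-compactness lemma a minimizing sequence admits a weakly-star convergent subsequence with limit $w^\lambda\in\mathcal{N}^\lambda$, and the weak-star continuity of $E$ established exactly as in Lemma \ref{3-345} (via $L^p$ estimates plus Sobolev embedding, giving $C^1$-convergence of $G\omega_n$) yields $E(w^\lambda)=c^*_\lambda$.

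For the structural form, I would fix $i\in\{1,\dots,k\}$ and $\delta>0$ and introduce the test-function family $w^\lambda+s(z_0-z_1)$, where $z_0,z_1\in L^\infty(D)$ are nonnegative, supported in $B_{\delta_0}(x_i)\cap D$, satisfy $\int_D z_0=\int_D z_1$, and vanish outside $\{w^\lambda_i\leq\kappa_i-\delta\}$ and $\{w^\lambda_i\geq\delta\}$ respectively. Because the perturbation is localized in $B_{\delta_0}(x_i)$, it leaves every other component of $w^\lambda$ untouched, so $w^\lambda+s(z_0-z_1)\in\mathcal{N}^\lambda$ for small $s>0$. The first-variation inequality at $s=0^+$ gives
\[
\int_D z_0(Gw^\lambda+q)\,dx \geq \int_D z_1(Gw^\lambda+q)\,dx,
\]
which in the limit $\delta\downarrow 0$ forces
\[
\sup_{\{w^\lambda_i>0\}}(Gw^\lambda+q)\leq\inf_{\{w^\lambda_i<\kappa_i\}\cap B_{\delta_0}(x_i)}(Gw^\lambda+q).
\]
Continuity of $Gw^\lambda+q$ and connectedness of $B_{\delta_0}(x_i)\cap D$ upgrade this to equality, and I name the common value $\nu_i^\lambda$. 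As in Proposition \ref{3-3}, on the level set $\{Gw^\lambda+q=\nu_i^\lambda\}$ the Sobolev regularity of $Gw^\lambda+q$ yields $\nabla(Gw^\lambda+q)=0$ a.e., so $w^\lambda_i=-\Delta(Gw^\lambda+q)=0$ a.e.\ there, producing $w^\lambda_i=\kappa_iI_{U^\lambda_i}$ with $U^\lambda_i$ as in \eqref{5-4}. For $j\in\{k+1,\dots,k+l\}$ the symmetric perturbation, now respecting $-\kappa_j\leq w^\lambda_j\leq 0$, reverses the inequalities and produces \eqref{5-5}.

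The main technical concern is that the admissible perturbations in the characterization step actually be nontrivial, i.e.\ that both $\{w^\lambda_i\leq\kappa_i-\delta\}\cap B_{\delta_0}(x_i)$ and $\{w^\lambda_i\geq\delta\}\cap B_{\delta_0}(x_i)$ have positive measure for some small $\delta>0$. The first inclusion is nontrivial because $|B_{\delta_0}(x_i)\cap D|>\lambda/\kappa_i$ for $\lambda$ small, so $w^\lambda_i$ cannot saturate the upper bound on a set of full measure; the second is guaranteed by the mass constraint $\int_D w^\lambda_i=\lambda>0$. The measure normalization of $U^\lambda_p$ asserted in \eqref{5-4}--\eqref{5-5} then follows from the integral constraints defining $\mathcal{N}^\lambda$.
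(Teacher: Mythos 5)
Your proposal is correct and follows essentially the same route as the paper's proof: existence via the weak-star sequential compactness of $\mathcal{N}^\lambda$ together with the weak-star continuity of $E$, and the patch structure via the localized two-function perturbations $w^\lambda+s(z_0-z_1)$ supported in each $B_{\delta_0}(x_p)$ (with signs reversed for $k+1\leq p\leq k+l$), exactly as in Proposition \ref{3-3} and the paper's Cases 1--2. Your additional verification that the perturbation classes are nontrivial, and the explicit level-set argument, merely fill in details the paper leaves implicit; the only slip is the crude lower bound, which should read $-\|q\|_{L^\infty(D)}(k+l)\lambda$ since $\int_D|\omega_p|\,dx=\lambda$ for each $p$, and this is harmless because any lower bound suffices there.
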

\begin{proof}
First by $L^p$ estimate and Sobolev embedding
\begin{equation}\label{5-6}
 |G\omega|_{L^\infty(D)}\leq C|G\omega|_{W^{2,2}(D)}\leq C|\omega|_{L^2(D)}\leq C\lambda^{\frac{1}{2}},\,\,\forall \omega\in\mathcal{N}^\lambda.
\end{equation}
Here $C$ still denotes various positive numbers not depending on $\lambda$. Therefore we obtain
\begin{equation}\label{5-7}
|\int_D\int_DG(x,y)\omega(x)\omega(y)dxdy|= |\int_DG\omega(x)\omega(x)dx|\leq C\lambda^{\frac{3}{2}}.
\end{equation}
On the other hand, for any $\omega\in\mathcal{N}^\lambda$ with $\omega=\sum_{p=1}^{k+l}\omega_p$, since $\omega_i\geq0$ for $1\leq i\leq k$ and $\omega_j\leq0$ for $k+1\leq j\leq k+l$, we have
\begin{equation}\label{5-8}
\begin{split}
  \int_Dq(x)\omega(x)dx= \sum_{p=1}^{k+l} \int_Dq(x)\omega_p(x)dx
    \geq \sum_{p=1}^{k+l}\int_Dq(x_p)\omega_p(x)dx=\lambda(\sum_{i=1}^kq(x_i)-\sum_{j=k+1}^{k+l}q(x_j)).
\end{split}
\end{equation}
From \eqref{5-7}\eqref{5-8} we can easily get
\begin{equation}\label{5-9}
\inf_{\mathcal{N}^\lambda}E\geq\lambda(\sum_{i=1}^kq(x_i)-\sum_{j=k+1}^{k+l}q(x_j))-C\lambda^\frac{3}{2},
\end{equation}
which implies that $E$ is bounded from below on $\mathcal{N}^\lambda$. Now we choose a minimizing sequence $\{\omega_n\}\subset\mathcal{N}^\lambda$ such that as $n\rightarrow+\infty$
\[E(\omega_n)\rightarrow c^*_\lambda.\]
Since $\mathcal{N}^\lambda$ is compact in the weak star topology of $L^\infty(D)$ and $E$ is weakly star continuous in $L^\infty(D)$, we deduce that there exists $w^\lambda\in\mathcal{N}^\lambda$ such that
\begin{equation}\label{5-10}
   E(\omega_n)\rightarrow E(w^\lambda)=c^*_\lambda.
\end{equation}

Since $w^\lambda\in\mathcal{N}^\lambda$, we can write $w^\lambda=\sum_{p=1}^{k+l}w^\lambda_p$. Now we show that $w^\lambda$ satisfies \eqref{5-3}. We need to consider the following two different cases.

Case 1: For $1\leq p\leq k$,
\begin{equation}\label{5-11}
   w_p^\lambda=\kappa_p I_{\{x\in D\mid Gw^\lambda(x)+q(x)<\nu^\lambda_p\}\cap B_{\delta_0}(x_p)}
\end{equation}
for some $\nu^\lambda_p\in\mathbb R$. To show this, we define a family of test functions $w_s^\lambda=w^\lambda+s(z_0-z_1), s>0$, where $z_0$ and $z_1$ satisfy
\begin{equation}
\begin{cases}
z_0,z_1\in L^\infty(D),\,\, z_0,z_1\geq 0\text{ a.e. in } D,

 \\ \int_Dz_0(x)dx=\int_D z_1(x)dx,

 \\ supp(z_0),supp(z_1)\subset B_{\delta_0}(x_p),
 \\z_0=0 \quad\text{in } D\setminus\{x\in D\mid w^\lambda(x)\leq\kappa_p-\delta\},
 \\z_1=0 \quad\text{in } D\setminus\{x\in D\mid w^\lambda(x)\geq\delta\}.
\end{cases}
\end{equation}
Here $\delta>0$ is small. It is not hard to check that for fixed $z_0,z_1$ and $\delta$, $\omega_s\in \mathcal{N}^\lambda$ for sufficiently small $s$. Since $w^\lambda$ is a minimizer, we get
\[0\leq\frac{dE(w_s)}{ds}|_{s=0^+}=\int_Dz_0(x)(Gw^\lambda(x)+q(x))dx-\int_Dz_1(x)(Gw^\lambda(x)+q(x))dx.\]
By the choice of $z_0$ and $z_1$ we obtain
\begin{equation}\label{5-12}
\sup_{\{x\in D\mid w^\lambda(x)>0\}\cap B_{\delta_0}(x_p)}(G w^\lambda+q)\leq\inf_{\{x\in D\mid w^\lambda(x)<\kappa_p\}\cap B_{\delta_0}(x_p)}(G w^\lambda+q).
\end{equation}
Since $D$ is simply-connected and $G w^\lambda+q$ is continuous in $\{x\in D\mid w^\lambda(x)>0\}\cap B_{\delta_0}(x_p)$, \eqref{5-12} is in fact an equality, i.e.,
\begin{equation}
\sup_{\{x\in D\mid w^\lambda(x)>0\}\cap B_{\delta_0}(x_p)}(G w^\lambda+q)=\inf_{\{x\in D\mid w^\lambda(x)<\kappa_p\}\cap B_{\delta_0}(x_p)}(G w^\lambda+q):=\nu^\lambda_p.
\end{equation}
Then it is easy to check that
\begin{equation}
\begin{cases}
w^\lambda=0 &\text{ a.e. } \text{in }\{x\in D\mid Gw^\lambda(x)+q(x)\geq\nu_p^\lambda\}\cap B_{\delta_0}(x_p),
 \\ w^\lambda=\kappa_p &\text{ a.e. } \text{in }\{x\in D\mid Gw^\lambda(x)+q(x)<\nu_p^\lambda\}\cap B_{\delta_0}(x_p).
\end{cases}
\end{equation}
So we obtain
\[w_p^\lambda=\kappa_pI_{\{x\in D\mid G\omega^\lambda(x)+q(x)<\nu_p^\lambda\}\cap B_{\delta_0}(x_p)}.\]

Case 2: For $k+1\leq p\leq k+l$,
\begin{equation}\label{5-13}
   w_p^\lambda=-\kappa_p I_{\{x\in D\mid Gw^\lambda(x)+q(x)>\nu^\lambda_p\}\cap B_{\delta_0}(x_p)}
\end{equation}
for some $\nu^\lambda_p\in\mathbb R$. In this case, we choose $w_s^\lambda=w^\lambda+s(z_0-z_1)$ as test function, where $s>0$ and $z_0, z_1$ satisfy
\begin{equation}
\begin{cases}
z_0,z_1\in L^\infty(D),\,\, z_0,z_1\leq 0\text{ a.e. in } D,

 \\ \int_Dz_0(x)dx=\int_D z_1(x)dx,
 \\ supp(z_0),supp(z_1)\subset B_{\delta_0}(x_p),
 \\z_0=0 \quad\text{in } D\setminus\{x\in D\mid w^\lambda(x)\geq \delta-\kappa_p\},
 \\z_1=0 \quad\text{in } D\setminus\{x\in D\mid w^\lambda(x)\leq-\delta\},
\end{cases}
\end{equation}
for $\delta>0$ small. The rest of the proof is similar to Case 1, therefore we omit it.

\end{proof}

\begin{remark}
Following the argument in Lemma \ref{3-1001}, we can also prove that $E$ has only one minimizer.
\end{remark}

\subsection{Limiting behavior of the minimizer as $\lambda\rightarrow0^+$}
Let $w^\lambda$ be the minimizer obtain in the last subsection. The following are several lemmas concerning the limiting behavior as $\lambda\rightarrow0^+$.
\begin{lemma}\label{5-101}
\begin{equation}\label{5-102}
  E(w^\lambda)\leq\lambda(\sum_{i=1}^{k}q(x_i)-\sum_{j=k+1}^{k+l}q(x_j))+C\lambda^\frac{3}{2}.
\end{equation}
\end{lemma}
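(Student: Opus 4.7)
The plan is to mimic the test function construction in Lemma~\ref{3-10}, but now produce a test element of $\mathcal{N}^\lambda$ rather than of $\mathcal{M}^\lambda$. Since each $x_p \in \partial D$ ($p=1,\dots,k+l$) and $\partial D$ is smooth, the interior sphere condition gives, for every sufficiently small $\lambda$ and every $p$, a point $x_p^\lambda \in D$ with $|x_p^\lambda - x_p| = \varepsilon_p$ and $B_{\varepsilon_p}(x_p^\lambda) \subset D$, where $\varepsilon_p$ is chosen so that $\kappa_p \pi \varepsilon_p^2 = \lambda$. For $\lambda$ small the ball $B_{\varepsilon_p}(x_p^\lambda)$ is contained in $B_{\delta_0}(x_p)\cap D$, and by the disjointness hypothesis on the $\overline{B_{\delta_0}(x_p)\cap D}$'s the test function
\[
\upsilon^\lambda := \sum_{i=1}^{k}\kappa_i I_{B_{\varepsilon_i}(x_i^\lambda)} - \sum_{j=k+1}^{k+l}\kappa_j I_{B_{\varepsilon_j}(x_j^\lambda)}
\]
belongs to $\mathcal{N}^\lambda$. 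Since $w^\lambda$ is a minimizer, $E(w^\lambda)\le E(\upsilon^\lambda)$, so it suffices to bound $E(\upsilon^\lambda)$ from above by the right-hand side of \eqref{5-102}.

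For the quadratic (kinetic) term I would use exactly the estimate \eqref{5-6}: by the $L^p$ estimate and Sobolev embedding,
\[
|G\upsilon^\lambda|_{L^\infty(D)} \le C|\upsilon^\lambda|_{L^2(D)} \le C\lambda^{1/2},
\]
while $\int_D|\upsilon^\lambda|\,dx = (k+l)\lambda$, so
\[
\frac{1}{2}\left|\int_D\int_D G(x,y)\upsilon^\lambda(x)\upsilon^\lambda(y)\,dxdy\right| \le \frac{1}{2}|G\upsilon^\lambda|_{L^\infty}|\upsilon^\lambda|_{L^1} \le C\lambda^{3/2}.
\]
For the linear term, $q\in C^1(\overline{D})$ gives $|q(x)-q(x_p)|\le |\nabla q|_{L^\infty}|x-x_p|\le C\varepsilon_p = C\lambda^{1/2}$ for every $x\in B_{\varepsilon_p}(x_p^\lambda)$, hence
\[
\int_D q\,\upsilon^\lambda\,dx = \sum_{i=1}^{k}\kappa_i\int_{B_{\varepsilon_i}(x_i^\lambda)} q\,dx - \sum_{j=k+1}^{k+l}\kappa_j\int_{B_{\varepsilon_j}(x_j^\lambda)} q\,dx = \lambda\Bigl(\sum_{i=1}^{k}q(x_i)-\sum_{j=k+1}^{k+l}q(x_j)\Bigr) + O(\lambda^{3/2}),
\]
since each $|B_{\varepsilon_p}(x_p^\lambda)| = \lambda/\kappa_p$ and the error per ball is of order $\varepsilon_p\cdot\lambda = \lambda^{3/2}$. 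Combining the two estimates yields \eqref{5-102}.

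The only nonroutine point is the geometric step of producing the balls $B_{\varepsilon_p}(x_p^\lambda)$ with the correct containment: they must simultaneously lie in $D$ (to make $\upsilon^\lambda$ admissible as a vorticity), inside $B_{\delta_0}(x_p)$ (to respect the support constraint in $\mathcal{N}^\lambda$), and be mutually disjoint (so that the pieces do not interfere). Smoothness of $\partial D$ handles the first via the interior sphere condition, and the smallness of $\varepsilon_p = (\lambda/(\kappa_p\pi))^{1/2}$ for $\lambda\ll 1$ handles the other two using the disjointness assumption on the $\overline{B_{\delta_0}(x_p)\cap D}$. Everything else is the same arithmetic as in Lemma~\ref{3-10}.
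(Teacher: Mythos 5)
Your proposal is correct and follows essentially the same route as the paper: the same test function $\sum_i\kappa_i I_{B_{\varepsilon_i}(x_i^\lambda)}-\sum_j\kappa_j I_{B_{\varepsilon_j}(x_j^\lambda)}$ built via the interior sphere condition with $\kappa_p\pi\varepsilon_p^2=\lambda$, the same $C\lambda^{3/2}$ bound on the quadratic term from the $L^p$/Sobolev estimate, and the same Lipschitz expansion of $q$ about each $x_p$ for the linear term.
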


\begin{proof}
For $\lambda$ sufficiently small, we define a test function
 \[v^\lambda=\sum_{i=1}^k\kappa_i I_{B_{\varepsilon_i}(x_i^\lambda)}-\sum_{j=k+1}^{k+l}\kappa_j I_{B_{\varepsilon_j}(x_j^\lambda)},\]
  where $\varepsilon_p$ satisfies $\kappa_p\pi\varepsilon_p^2=\lambda$, $B_{\varepsilon_p}(x_p^\lambda)\subset B_{\delta_0}(x_p)$ and $|x_p-x^\lambda_p|=\varepsilon_p$ for each $1\leq p\leq k+l$. Note that such test function exists since $D$ satisfies the interior sphere condition. It is obvious that $v^\lambda\in\mathcal{N}^\lambda$. So we have
  \begin{equation}\label{5-103}
   \begin{split}
     E(w^\lambda) \leq E(v^\lambda)
       =\frac{1}{2}\int_D\int_DG(x,y)v^\lambda(x)v^\lambda(y)dxdy+\int_Dq(x)v^\lambda(x)dx.
   \end{split}
\end{equation}
By \eqref{5-7},
\begin{equation}\label{5-104}
|\frac{1}{2}\int_D\int_DG(x,y)v^\lambda(x)v^\lambda(y)dxdy|\leq C\lambda^\frac{3}{2}.
\end{equation}
On the other hand,
\begin{equation}\label{5-105}
  \begin{split}
    \int_Dq(x)v^\lambda(x)dx &= \sum_{i=1}^k\kappa_i \int_{B_{\varepsilon_i}(x_i^\lambda)}q(x)dx-\sum_{j=k+1}^{k+l}\kappa_j \int_{B_{\varepsilon_j}(x_j^\lambda)}q(x)dx\\
    &=\sum_{i=1}^k\kappa_i \int_{B_{\varepsilon_i}(x_i^\lambda)}(q(x)-q(x_i))dx-\sum_{j=k+1}^{k+l}\kappa_j \int_{B_{\varepsilon_j}(x_j^\lambda)}(q(x)-q(x_j))dx\\
    &+\sum_{i=1}^k\kappa_i \int_{B_{\varepsilon_i}(x_i^\lambda)}q(x_i)dx-\sum_{j=k+1}^{k+l}\kappa_j \int_{B_{\varepsilon_j}(x_j^\lambda)}q(x_j)dx\\
     &\leq\sum_{i=1}^k\kappa_i \int_{B_{\varepsilon_i}(x_i^\lambda)}|\nabla q|_{L^\infty(D)}|x-x_i|dx+\sum_{j=k+1}^{k+l}\kappa_j \int_{B_{\varepsilon_j}(x_j^\lambda)}|\nabla q|_{L^\infty(D)}|x-x_j|dx\\
    &+\lambda(\sum_{i=1}^k q(x_i)-\sum_{j=k+1}^{k+l} q(x_j))\\
     &\leq\sum_{p=1}^{k+l}\kappa_p \int_{B_{\varepsilon_p}(x_p^\lambda)}C\varepsilon_pdx
   +\lambda(\sum_{i=1}^kq(x_i)-\sum_{j=k+1}^{k+l} q(x_j))\\
   &\leq C\lambda^\frac{3}{2}
   +\lambda(\sum_{i=1}^k q(x_i)-\sum_{j=k+1}^{k+l} q(x_j)).
  \end{split}
\end{equation}

Combining \eqref{5-103},\eqref{5-104} and \eqref{5-105} we get the desired result.
\end{proof}

\begin{lemma}\label{5-106}
For each $p$, $1\leq p\leq k+l, $ we have
\begin{equation}\label{5-107}
\int_D(Gw^\lambda(x)+q(x)-\nu_p^\lambda)w_p^\lambda(x)dx\geq -C\lambda^\frac{3}{2}.
\end{equation}

\end{lemma}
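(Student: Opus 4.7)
The plan is to adapt the argument of Lemma~\ref{3-15} patch by patch, taking advantage of the fact that by \eqref{5-4}--\eqref{5-5} each $U_p^\lambda$ sits inside $B_{\delta_0}(x_p)$ and is a one-sided sublevel set of $Gw^\lambda+q$ within that ball. I treat Case~1, $1\leq p\leq k$, in detail; Case~2 for $k+1\leq p\leq k+l$ is completely symmetric upon swapping negative parts for positive parts and reversing signs. Fix $p$ with $1\leq p\leq k$ and set $\varphi_p^\lambda:=Gw^\lambda+q-\nu_p^\lambda$. The key observation is that $(\varphi_p^\lambda)_-:=\min\{\varphi_p^\lambda,0\}$, viewed as an element of $W^{1,1}(B_{\delta_0}(x_p))$, is supported in $U_p^\lambda$---this is immediate from the definition \eqref{5-4} of $U_p^\lambda$ as the full sublevel set $\{Gw^\lambda+q<\nu_p^\lambda\}$ intersected with the ball.

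Since $w_p^\lambda=\kappa_p I_{U_p^\lambda}$, $|U_p^\lambda|=\lambda$, and $\varphi_p^\lambda<0$ on $U_p^\lambda$, H\"older's inequality reduces the task:
\[
\int_D(Gw^\lambda+q-\nu_p^\lambda)w_p^\lambda\,dx=\kappa_p\int_{U_p^\lambda}\varphi_p^\lambda\,dx\geq-\kappa_p\lambda^{1/2}\|\varphi_p^\lambda\|_{L^2(U_p^\lambda)}.
\]
Next I apply the two-dimensional Sobolev embedding $W^{1,1}(B_{\delta_0}(x_p))\hookrightarrow L^2(B_{\delta_0}(x_p))$ to $(\varphi_p^\lambda)_-$ and use H\"older on the resulting $L^1$ integrals over $U_p^\lambda$, producing
\[
\|\varphi_p^\lambda\|_{L^2(U_p^\lambda)}\leq C\lambda^{1/2}\|\varphi_p^\lambda\|_{L^2(U_p^\lambda)}+C\lambda^{1/2}\|\nabla\varphi_p^\lambda\|_{L^2(U_p^\lambda)}.
\]
For $\lambda$ sufficiently small the first term on the right is absorbed into the left, leaving $\|\varphi_p^\lambda\|_{L^2(U_p^\lambda)}\leq C\lambda^{1/2}\|\nabla\varphi_p^\lambda\|_{L^2(U_p^\lambda)}$.

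The gradient norm is then controlled exactly as in Lemma~\ref{3-15}: since $\|w^\lambda\|_{L^3(D)}\leq C\lambda^{1/3}$, the $L^p$ estimate together with the embedding $W^{2,3}(D)\hookrightarrow C^1(\overline{D})$ gives $\|\nabla Gw^\lambda\|_{L^\infty(D)}\leq C$, so $\|\nabla\varphi_p^\lambda\|_{L^2(U_p^\lambda)}^2\leq C|U_p^\lambda|\leq C\lambda$. Chaining the estimates yields $\|\varphi_p^\lambda\|_{L^2(U_p^\lambda)}\leq C\lambda$ and hence the desired bound $-C\lambda^{3/2}$. There is no genuine obstacle; the entire argument is a patch-localized version of Lemma~\ref{3-15}. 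The only point that needs care is that the Sobolev embedding must be performed on the ball $B_{\delta_0}(x_p)$ rather than on $D$, which is legitimate precisely because the definition \eqref{5-4} forces $(\varphi_p^\lambda)_-$ to vanish off $U_p^\lambda$ inside that ball, so no contribution from $\partial B_{\delta_0}(x_p)$ enters the estimate.
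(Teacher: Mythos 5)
Your argument is correct and follows essentially the same route as the paper's proof: H\"older's inequality on $U_p^\lambda$, the Sobolev embedding $W^{1,1}(B_{\delta_0}(x_p))\hookrightarrow L^2(B_{\delta_0}(x_p))$ applied to the negative part $(\varphi_p^\lambda)_-$ (which vanishes off $U_p^\lambda$ in the ball precisely by \eqref{5-4}), absorption of the lower-order term for small $\lambda$, and the $L^\infty$ bound on $\nabla Gw^\lambda$ from the $L^p$ estimate. Your explicit remark on why the localization to the ball is legitimate is exactly the point the paper uses implicitly in passing from the integral over $U_p^\lambda$ to the integral of $\varphi_-^\lambda$ over $B_{\delta_0}(x_p)$.
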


\begin{proof}
We only prove the case $1\leq p\leq k$, for the other part the proof is similar. For simplicity we denote $\varphi^\lambda=Gw^\lambda+q-\nu^\lambda_p, \varphi^\lambda_-=\min{\{\varphi^\lambda,0\}}$ Then by H\"older's inequality
\begin{equation}\label{5-108}
\begin{split}
 \int_D\varphi^\lambda(x)w^\lambda_p(x)dx
 =\kappa_p\int_{U^\lambda_p}\varphi^\lambda(x)dx
 \geq-\kappa_p|U^\lambda_p|^\frac{1}{2}(\int_{U^\lambda_p}|\varphi^\lambda(x)|^2dx)^\frac{1}{2}.
 \end{split}
\end{equation}
On the other hand, by Sobolev embedding $W^{1,1}(B_{\delta_0}(x_p))\hookrightarrow L^2(B_{\delta_0}(x_p))$ and H\"older's inequality,
\begin{equation}\label{5-109}
\begin{split}
(\int_{U^\lambda_p}|\varphi^\lambda(x)|^2dx)^\frac{1}{2}=&(\int_{B_{\delta_0}(x_p)}|\varphi^\lambda_-(x)|^2dx)^\frac{1}{2}\\
\leq& C(\int_{B_{\delta_0}(x_p)}|\varphi^\lambda_-(x)|dx+\int_{B_{\delta_0}(x_p)}|\nabla\varphi^\lambda_-(x)|dx)\\
\leq&C|U^\lambda_p|^\frac{1}{2}(\int_{U^\lambda_p}|\varphi^\lambda(x)|^2dx)^\frac{1}{2}
+C|U^\lambda_p|^\frac{1}{2}(\int_{U^\lambda_p}|\nabla\varphi^\lambda(x)|^2dx)^\frac{1}{2}.
\end{split}
\end{equation}
Since $|U^\lambda_p|\rightarrow0$ as $\lambda\rightarrow0^+$, we get from \eqref{5-109}
\begin{equation}\label{5-110}
(\int_{U^\lambda_p}|\varphi^\lambda(x)|^2dx)^\frac{1}{2}\leq C|U^\lambda_p|^\frac{1}{2}(\int_{U^\lambda_p}|\nabla\varphi^\lambda(x)|^2dx)^\frac{1}{2}.
\end{equation}
Taking into account \eqref{5-108} and \eqref{5-110} and using $L^p$ estimate, we obtain
\begin{equation}\label{5-111}
\begin{split}
  \int_D\varphi^\lambda(x)w^\lambda_p(x)dx&\geq -C\lambda (\int_{U^\lambda_p}|\nabla\varphi^\lambda(x)|^2dx)^\frac{1}{2}\\
  &\geq-C\lambda (\int_{U^\lambda_p}|\nabla Gw^\lambda(x)|^2+|\nabla q(x)|^2dx)^\frac{1}{2}\\
  &\geq -C\lambda^\frac{3}{2}(|\nabla Gw^\lambda|_{L^\infty(D)}+|\nabla q|_{L^\infty(D})\\
  &\geq -C\lambda^\frac{3}{2},
  \end{split}
\end{equation}
which completes the proof.
\end{proof}

\begin{lemma}\label{5-200}
(i)For $1\leq i\leq k$,
\begin{equation}\label{5-301}
\nu^\lambda_i>q(x_i)-C\lambda^\frac{1}{2};
\end{equation}
(ii) For  $k+1\leq j\leq k+l$,
\begin{equation}\label{5-302}
\nu^\lambda_j< q(x_j)+C\lambda^\frac{1}{2}.
\end{equation}
\end{lemma}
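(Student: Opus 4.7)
The plan is to prove both parts by the same kind of direct pointwise argument that established the lower bound $\mu^\lambda > \min_{\overline{D}} q$ in Lemma \ref{3-100}. The two ingredients are: (a) the uniform $L^\infty$ bound $|Gw^\lambda|_{L^\infty(D)} \le C\lambda^{1/2}$ which is already recorded in \eqref{5-6}; and (b) the choice of $\delta_0$ ensuring that $x_p$ is the \emph{unique} minimum (resp.\ maximum) point of $q$ on the closed set $\overline{B_{\delta_0}(x_p) \cap D}$, so that $q(x) \ge q(x_i)$ for every $x \in B_{\delta_0}(x_i) \cap D$ when $i \le k$, and $q(x) \le q(x_j)$ for every $x \in B_{\delta_0}(x_j) \cap D$ when $j \ge k+1$.

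For part (i), with $1 \le i \le k$, I first note that $|U_i^\lambda| = \lambda > 0$, so $U_i^\lambda$ is nonempty and I may pick a point $x \in U_i^\lambda$. By the defining relation \eqref{5-4} for $U_i^\lambda$, this point satisfies the strict inequality
\[
\nu_i^\lambda \;>\; Gw^\lambda(x) + q(x).
\]
Since $x \in B_{\delta_0}(x_i)$ and $x_i$ minimizes $q$ on $\overline{B_{\delta_0}(x_i) \cap D}$, we have $q(x) \ge q(x_i)$, and from \eqref{5-6} we have $Gw^\lambda(x) \ge -|Gw^\lambda|_{L^\infty(D)} \ge -C\lambda^{1/2}$. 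Adding these two estimates and inserting them into the strict inequality above yields the desired bound $\nu_i^\lambda > q(x_i) - C\lambda^{1/2}$.

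Part (ii) is entirely symmetric: for $k+1 \le j \le k+l$, $|U_j^\lambda| = \lambda > 0$ so I can pick $x \in U_j^\lambda$; by \eqref{5-5} one has $\nu_j^\lambda < Gw^\lambda(x) + q(x)$, and since $x_j$ is the unique \emph{maximum} of $q$ on $\overline{B_{\delta_0}(x_j) \cap D}$ we have $q(x) \le q(x_j)$, while $Gw^\lambda(x) \le C\lambda^{1/2}$ again by \eqref{5-6}. Combining gives $\nu_j^\lambda < q(x_j) + C\lambda^{1/2}$.

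There is essentially no serious obstacle here, since these are the "easy" one-sided bounds on the Lagrange multipliers, analogous to $\mu^\lambda > \min_{\overline{D}} q$ in Lemma \ref{3-100}; the only small point worth emphasizing in the write-up is that because $w^\lambda$ is sign-changing the maximum principle argument used in Lemma \ref{3-100} is unavailable, and one must instead invoke the two-sided $L^\infty$ bound \eqref{5-6} obtained from the $L^p$ estimate and Sobolev embedding. The complementary (harder) bounds in the other direction — which would use the energy estimate of Lemma \ref{5-101} together with Lemma \ref{5-106} and the identity $E(w^\lambda) = -\tfrac{1}{2}\int Gw^\lambda w^\lambda\,dx + \sum_p \int(Gw^\lambda + q - \nu_p^\lambda) w_p^\lambda\,dx + \lambda\sum_i\nu_i^\lambda - \lambda\sum_j\nu_j^\lambda$ — are not needed for the present lemma.
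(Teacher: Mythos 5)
Your argument is correct and coincides with the paper's own proof: the paper likewise picks a point $y_p\in U_p^\lambda$ (nonempty since $|U_p^\lambda|=\lambda$), uses the defining inequality $Gw^\lambda(y_p)+q(y_p)<\nu_p^\lambda$ (resp.\ $>\nu_p^\lambda$), the extremality of $x_p$ on $\overline{B_{\delta_0}(x_p)\cap D}$, and the bound $|Gw^\lambda|_{L^\infty(D)}\leq C\lambda^{1/2}$ from \eqref{5-6}. Your remark that the maximum-principle shortcut of Lemma \ref{3-100} must be replaced by the two-sided $L^\infty$ estimate is exactly the point, and no more is needed.
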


\begin{proof}
First recall that by $L^p$ estimate $|Gw^\lambda|_{L^\infty(D)}\leq C\lambda^\frac{1}{2}$.
Since $U^\lambda_p$ is not empty, we can choose $y_p\in U^\lambda_p$, then for $1\leq p\leq k$
\begin{equation}\label{5-115-2}
Gw^\lambda(y_p)+q(y_p)<\nu^\lambda_p,
\end{equation}
and for $k+1\leq p\leq k+l$
\begin{equation}\label{5-116}
Gw^\lambda(y_p)+q(y_p)>\nu^\lambda_p.
\end{equation}
From \eqref{5-115-2} we get for $1\leq p\leq k$
\begin{equation}\label{5-117}
   q(x_p)-C\lambda^\frac{1}{2}\leq Gw^\lambda(y_p)+q(y_p)<\nu^\lambda_p.
\end{equation}
From \eqref{5-116} we get for $1\leq p\leq k$
\begin{equation}\label{5-118}
   q(x_p)+C\lambda^\frac{1}{2}\geq Gw^\lambda(y_p)+q(y_p)>\nu^\lambda_p.
\end{equation}
\end{proof}

\begin{lemma}\label{5-112}
(i)For $1\leq i\leq k$,
\begin{equation}\label{5-303}
\nu^\lambda_i\leq q(x_i)+C\lambda^\frac{1}{2};
\end{equation}
(ii) For  $k+1\leq j\leq k+l$,
\begin{equation}\label{5-304}
\nu^\lambda_j\geq q(x_j)-C\lambda^\frac{1}{2}.
\end{equation}
\end{lemma}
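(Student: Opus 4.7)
The plan is to mirror the argument used for Lemma \ref{3-100}, but since we now have several Lagrange multipliers, the energy identity only yields one aggregate scalar inequality; to extract the individual one-sided bounds stated in the lemma we must combine it with the complementary one-sided bounds already furnished by Lemma \ref{5-200}.

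First I would derive the energy identity
\begin{equation*}
E(w^\lambda) = -\frac{1}{2}\int_D Gw^\lambda(x)\, w^\lambda(x)\, dx + \sum_{p=1}^{k+l}\int_D\bigl(Gw^\lambda(x)+q(x)-\nu_p^\lambda\bigr)w_p^\lambda(x)\, dx + \lambda\sum_{i=1}^{k}\nu_i^\lambda - \lambda\sum_{j=k+1}^{k+l}\nu_j^\lambda,
\end{equation*}
which follows by writing $\int (Gw^\lambda + q) w^\lambda\,dx = \sum_{p}\int (Gw^\lambda + q)w_p^\lambda\,dx$ and using $\int_D w_p^\lambda\,dx = \lambda$ for $1\leq p\leq k$ and $\int_D w_p^\lambda\,dx = -\lambda$ for $k+1\leq p\leq k+l$. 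Together with the $L^\infty$ estimate $|Gw^\lambda|_{L^\infty(D)} \leq C\lambda^{1/2}$ (which gives $|\int Gw^\lambda\,w^\lambda\,dx|\leq C\lambda^{3/2}$), the lower bound for each individual integral from Lemma \ref{5-106}, and the upper bound for $E(w^\lambda)$ from Lemma \ref{5-101}, this identity yields the aggregate estimate
\begin{equation*}
\sum_{i=1}^{k}\nu_i^\lambda - \sum_{j=k+1}^{k+l}\nu_j^\lambda \;\leq\; \sum_{i=1}^{k}q(x_i) - \sum_{j=k+1}^{k+l}q(x_j) + C\lambda^{1/2}.
\end{equation*}

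The final step is to isolate each multiplier. To prove (i), fix $i_0$ with $1\leq i_0\leq k$ and rewrite
\begin{equation*}
\nu_{i_0}^\lambda = \Bigl(\sum_{i=1}^{k}\nu_i^\lambda - \sum_{j=k+1}^{k+l}\nu_j^\lambda\Bigr) - \sum_{\substack{i=1\\ i\neq i_0}}^{k}\nu_i^\lambda + \sum_{j=k+1}^{k+l}\nu_j^\lambda.
\end{equation*}
For each $i\neq i_0$ in the second sum Lemma \ref{5-200}(i) gives $-\nu_i^\lambda \leq -q(x_i)+C\lambda^{1/2}$, and for each $j$ in the third sum Lemma \ref{5-200}(ii) gives $\nu_j^\lambda \leq q(x_j)+C\lambda^{1/2}$. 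Substituting these and the aggregate bound, all $q(x_p)$ contributions for $p\neq i_0$ cancel and we obtain $\nu_{i_0}^\lambda \leq q(x_{i_0}) + C\lambda^{1/2}$. The proof of (ii) is entirely analogous: for fixed $j_0$ with $k+1\leq j_0\leq k+l$, isolate $-\nu_{j_0}^\lambda$ in the same way and apply Lemma \ref{5-200}(i),(ii) to the remaining terms to conclude $\nu_{j_0}^\lambda \geq q(x_{j_0}) - C\lambda^{1/2}$.

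The only conceptual point, not really an obstacle, is that the minimization gives only one scalar energy inequality controlling the sum $\sum_i\nu_i^\lambda - \sum_j\nu_j^\lambda$, not each $\nu_p^\lambda$ individually; the decoupling is provided entirely by Lemma \ref{5-200}, so once the aggregate estimate is in hand the remainder is a short bookkeeping computation.
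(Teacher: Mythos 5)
Your proposal is correct and follows essentially the same route as the paper: the same energy identity, combined with Lemma \ref{5-101}, Lemma \ref{5-106} and the estimate $|Gw^\lambda|_{L^\infty(D)}\leq C\lambda^{1/2}$ to get the aggregate bound \eqref{5-114}, then decoupled term by term using Lemma \ref{5-200}. The only difference is that you spell out the final bookkeeping that the paper leaves implicit in the phrase ``\eqref{5-301}, \eqref{5-302} and \eqref{5-114} together give the desired result.''
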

\begin{proof}
Notice that $E(w^\lambda)$ can be written as
\begin{equation}\label{5-113}
\begin{split}
E(w^\lambda)&=-\frac{1}{2}\int_D\int_DG(x,y)w^\lambda(x)w^\lambda(y)dxdy+\sum_{p=1}^{k+1}\int_D(Gw^\lambda(x)+q(x)-\nu^\lambda_p)w^\lambda_p(x)dx\\
&+\lambda(\sum_{i=1}^{k}\nu^\lambda_i-\sum_{j=k+1}^{k+l}\nu^\lambda_j).\\
\end{split}
\end{equation}
Taking into account Lemma \ref{5-101} and Lemma \ref{5-106}, we deduce from \eqref{5-113} that
\begin{equation}\label{5-114}
   \sum_{i=1}^{k}\nu^\lambda_i-\sum_{j=k+1}^{k+l}\nu^\lambda_j\leq \sum_{i=1}^{k}q(x_i)-\sum_{j=k+1}^{k+l}q(x_j)+C\lambda^\frac{1}{2}.
\end{equation}
Now \eqref{5-301}, \eqref{5-302} and \eqref{5-114} together give the desired result.

\end{proof}

\begin{lemma}\label{3-500}
For each $p$, $1\leq p\leq k+l,$ we have
\begin{equation}\label{5-115}
  \lim_{\lambda\rightarrow0^+}\sup_{x\in U^\lambda_p}|q(x)-q(x_p)|=0.
\end{equation}
\end{lemma}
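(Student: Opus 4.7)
The plan is to squeeze $q(x)$ between two bounds that differ by $O(\lambda^{1/2})$ for each $x \in U_p^\lambda$, using the defining inequality of $U_p^\lambda$ together with the control on $\nu_p^\lambda$ already established. Three ingredients are required: (a) the $L^\infty$ bound $|Gw^\lambda|_{L^\infty(D)} \leq C\lambda^{1/2}$ that was derived via the $L^p$--Sobolev estimate in \eqref{5-6}, (b) the two-sided estimate of $\nu_p^\lambda$ from Lemmas \ref{5-200} and \ref{5-112}, namely $|\nu_i^\lambda - q(x_i)| \leq C\lambda^{1/2}$ for $1\leq i\leq k$ and $|\nu_j^\lambda - q(x_j)| \leq C\lambda^{1/2}$ for $k+1\leq j\leq k+l$, and (c) the strict extremality of $x_p$ on the closed ball $\overline{B_{\delta_0}(x_p)\cap D}$ that was built into the choice of $\delta_0$ in Theorem \ref{1-15}.

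First I would handle the minimum case $1 \leq i \leq k$. For any $x \in U_i^\lambda$, the inclusion $U_i^\lambda \subset \{Gw^\lambda + q < \nu_i^\lambda\}$ combined with (a) and (b) yields
\[q(x) < \nu_i^\lambda - Gw^\lambda(x) \leq q(x_i) + C\lambda^{1/2} + C\lambda^{1/2} = q(x_i) + C\lambda^{1/2}.\]
For the matching lower bound I would use (c): since $U_i^\lambda \subset B_{\delta_0}(x_i)\cap D$ and $x_i$ is the unique minimum point of $q$ on $\overline{B_{\delta_0}(x_i)\cap D}$, we have $q(x) \geq q(x_i)$. Hence $0 \leq q(x) - q(x_i) \leq C\lambda^{1/2}$, uniformly in $x \in U_i^\lambda$, which gives the conclusion on letting $\lambda \to 0^+$.

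The case $k+1 \leq j \leq k+l$ is entirely symmetric with signs reversed: the inclusion $U_j^\lambda \subset \{Gw^\lambda + q > \nu_j^\lambda\}$ together with the lower bound on $\nu_j^\lambda$ and (a) gives $q(x) > q(x_j) - C\lambda^{1/2}$, while (c) now says $x_j$ is the unique maximum of $q$ on the closed region, so $q(x) \leq q(x_j)$. No step is a real obstacle — the analytic content was concentrated in Lemma \ref{5-112}, whose proof combined the sharp energy upper bound of Lemma \ref{5-101} with the Sobolev-type lower bound of Lemma \ref{5-106}; the present lemma is essentially a bookkeeping corollary of those estimates once the strict local extremality assumption in the statement of Theorem \ref{1-15} is invoked.
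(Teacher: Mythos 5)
Your proof is correct and follows essentially the same route as the paper: squeeze $q$ on $U_p^\lambda$ between $q(x_p)\pm C\lambda^{1/2}$ using the defining inequality of $U_p^\lambda$, the multiplier estimates of Lemmas \ref{5-200}--\ref{5-112}, and the bound $|Gw^\lambda|_{L^\infty(D)}\leq C\lambda^{1/2}$. Your explicit use of the strict extremality of $x_p$ on $\overline{B_{\delta_0}(x_p)\cap D}$ for the one-sided bound is exactly the ingredient the paper's terser two-sided display implicitly relies on, so there is no substantive difference.
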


\begin{proof}
First by Lemma \ref{5-112} and the definition of $U^\lambda_p$, for each $x\in U^\lambda_p$, we have
\[q(x_p)-C\lambda^\frac{1}{2}\leq Gw^\lambda(x)+q(x)<\nu^\lambda_p\leq q(x_p)+C\lambda^\frac{1}{2},\,\,1\leq p\leq k,\]
\[q(x_p)-C\lambda^\frac{1}{2}\leq\nu^\lambda< Gw^\lambda(x)+q(x)\leq q(x_p)+C\lambda^\frac{1}{2},\,\,1\leq p\leq k,\]
from which it is easy to see that for each $p$, $1\leq p\leq k+l$ and $x\in U^\lambda_p$,
\[|q(x)-q(x_p)|\leq |Gw^\lambda(x)+q(x)-q(x_p)|+|Gw^\lambda(x)|\leq C\lambda^\frac{1}{2},\]
which implies \eqref{5-115}.
\end{proof}

\subsection{Proof of Theorem \ref{1-15}}
Now we are ready to prove Theorem \ref{1-15}.

\begin{proof}[Proof of Theorem \ref{1-15}]
First we show that for each $p, 1\leq p\leq k+l$, $U^\lambda_p$ shrinks to $x_p$ as $\lambda\rightarrow0^+.$ Suppose that there exist $\delta_1>0, \lambda_n\rightarrow0^+, y_n\in U^\lambda_p$ such that $|y_n-x_p|\geq \delta_0.$ By the continuity of $q$ we deduce that $\inf_{n}|q(y_n)-q(x_p)|>0$, which is a contradiction to Lemma \ref{3-500}.

Now we show that $w^\lambda$ is a weak solution of \eqref{1-7} for $\lambda$ sufficiently small. For any $\xi\in C^{\infty}_c(D)$, let $\Phi_t(x)$ be defined by \eqref{4-1}. Then since $U^\lambda_p$ shrinks to $x_p$ for each $p$, we deduce that for $|t|<<1$, $w_t:=w^\lambda(\Phi_t(\cdot))\in \mathcal{N}^\lambda$, so we still have  $\frac{dE(w_t)}{dt}|_{t=0}=0$, which gives
\[\int_Dw^\lambda\nabla^\perp(Gw^\lambda+q)\cdot\nabla\xi dx=0.\]
\end{proof}

\section{Further Discussion}

When we consider vortex patch solutions near a harmonic function, there are four possibilities:
\begin{enumerate}
\item a positive vortex patch near a minimum point of the harmonic function;
\item a negative vortex patch near a minimum point of the harmonic function;
\item a negative vortex patch near a maximum point of the harmonic function;
\item a positive vortex patch near a maximum point of the harmonic function.
\end{enumerate}
As has been mentioned in Remark \ref{2-102}, $(1)$ is equivalent to $(3)$ and $(2)$ is equivalent to $(4)$. Now we give an example in one dimension to illustrate $(1)$ and $(2)$.

Let $u$ be harmonic in $(0,1)\subset\mathbb R$ with boundary condition $u(0)=0, u(1)=1.$  We perturb $u$ near $x=0$ by constructing two kinds of vortex patch solutions with the same boundary condition, that is, we consider the following two problems:
\begin{equation}\label{6-1}
\mathcal{P}_1:
\begin{cases}
-\frac{d^2u^\lambda}{dx^2}=I_{\{u^\lambda<\mu^\lambda\}} &x\in(0,1),\\
|\{u^\lambda<\mu^\lambda\}|=\lambda,\\
u^\lambda(0)=u &\text{ on } \partial(0,1),
\end{cases}
\end{equation}
\begin{equation}\label{6-2}
\mathcal{P}_2:
\begin{cases}
-\frac{d^2v^\lambda}{dx^2}=-I_{\{v^\lambda>\nu^\lambda\}} &x\in(0,1),\\
|\{v^\lambda>\nu^\lambda\}|=\lambda,\\
v^\lambda(0)=u &\text{ on } \partial(0,1).
\end{cases}
\end{equation}
Explicit solutions to $\mathcal{P}_1$ and $\mathcal{P}_2$ are
\begin{equation}\label{6-3}
  u^\lambda(x)=
  \begin{cases}
  -x^2+(2\lambda+1-\lambda^2)x &0\leq x\leq\lambda,\\
  (1-\lambda^2)x+\lambda^2 &\lambda\leq x\leq 1,
  \end{cases}
\end{equation}
\begin{equation}\label{6-4}
  v^\lambda(x)=
  \begin{cases}
  x^2+(\lambda-1)^2x &0\leq x\leq\lambda,\\
  (\lambda^2+1)x-\lambda^2 &\lambda\leq x\leq 1.
  \end{cases}
\end{equation}

In Theorem \ref{1-11} and \ref{1-15}, we have solved $\mathcal{P}_1$ in two dimensions. In this section we consider $\mathcal{P}_2$.
More precisely, we construct steady vortex patches near maximum points of $q$ with positive vorticity and near minimum points of $q$ with negative vorticity.

The results are as follows.
\begin{theorem}\label{1-11-2}
Let $q\in C^2(D)\cap C^1(\overline{D})$ be a harmonic function and $\kappa$ be a positive real number. Set $\mathcal{G}:=\{x\in \overline{D}\mid q(x)=\max_{ \overline{D}}q\}$. Then
for any given positive number $\lambda$ with $\lambda<\kappa|D|$, there exists a weak solution $\omega^\lambda$ of \eqref{1-7} having the form
\begin{equation}\label{1-12-2}
  \omega^\lambda=\kappa I_{\Omega^\lambda}, \,\, \Omega^\lambda=\{x\in D\mid G\omega^\lambda(x)+q(x)>\mu^\lambda\},\,\,\kappa|\Omega^\lambda|=\lambda
\end{equation}
for some $\mu^\lambda\in\mathbb{R}$ depending on $\lambda$.
Furthermore, if $q$ is not a constant, then $\mathcal{G}\subset\partial D$ and $\Omega^\lambda$ approaches $\mathcal{G}$, or equivalently, for any $\delta>0$, there exists $\lambda_0>0$, such that for any $\lambda<\lambda_0$, we have
\begin{equation}\label{1-14-2}
\omega^\lambda\subset\mathcal{G}_\delta:=\{x\in D\mid dist(x,\mathcal{G})<\delta\}.
\end{equation}
\end{theorem}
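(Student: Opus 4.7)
The strategy is to run the argument for Theorem \ref{1-11} with all extremal inequalities reversed, namely to \emph{maximize} the same energy functional $E$ over the same admissible class $\mathcal{M}^\lambda$ rather than minimize it. Since $\mathcal{M}^\lambda$ is weakly-$*$ sequentially compact in $L^\infty(D)$ and $E$ is weakly-$*$ continuous there (both established in Section 3.1), the supremum
\begin{equation*}
\tilde c_\lambda := \sup\{E(\omega) \mid \omega \in \mathcal{M}^\lambda\}
\end{equation*}
is attained at some $\omega^\lambda \in \mathcal{M}^\lambda$.

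The bang-bang structure is extracted by the same perturbation $\omega_s = \omega^\lambda + s(z_0 - z_1)$ as in Proposition \ref{3-3}, except that the \emph{maximizing} property now forces $\frac{dE(\omega_s)}{ds}\big|_{s=0^+} \leq 0$. This reverses the sign and yields
\begin{equation*}
\inf_{\{\omega^\lambda > 0\}}(G\omega^\lambda + q) \geq \sup_{\{\omega^\lambda < \kappa\}}(G\omega^\lambda + q),
\end{equation*}
which, combined with continuity of $G\omega^\lambda + q$ on the simply connected $D$, produces $\mu^\lambda \in \mathbb{R}$ with $\omega^\lambda = \kappa I_{\Omega^\lambda}$ for $\Omega^\lambda = \{x \in D \mid G\omega^\lambda(x) + q(x) > \mu^\lambda\}$. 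That $\omega^\lambda$ is a weak solution of \eqref{1-7} follows verbatim from the area-preserving flow argument in Section 3.3, which only uses $\frac{dE(\omega_t)}{dt}\big|_{t=0} = 0$ at a critical point.

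The quantitative concentration demands analogues of Lemmas \ref{3-10} and \ref{3-15}. For a \emph{lower} bound on $E(\omega^\lambda)$, pick $x_0 \in \partial D$ with $q(x_0) = \max_{\overline D} q$ and an interior tangent ball $B_\varepsilon(x^\lambda) \subset D$ at $x_0$ with $\kappa\pi\varepsilon^2 = \lambda$; testing with $\upsilon^\lambda = \kappa I_{B_\varepsilon(x^\lambda)}$ and using $|G\upsilon^\lambda|_{L^\infty(D)} \leq C\lambda^{1/2}$ together with $|q(x) - q(x_0)| \leq C\varepsilon$ on the ball yields $E(\omega^\lambda) \geq \lambda \max_{\overline D} q - C\lambda^{3/2}$. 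For the companion integral estimate, I repeat Lemma \ref{3-15} with $\varphi^\lambda_+ := \max\{\varphi^\lambda, 0\}$ in place of $\varphi^\lambda_-$, applying $W^{1,1} \hookrightarrow L^2$ to $\varphi^\lambda_+$; this gives $\int_D (G\omega^\lambda + q - \mu^\lambda)\omega^\lambda \, dx \leq C\lambda^{3/2}$. Plugging both into the sign-symmetric identity \eqref{3-22} yields $\mu^\lambda \geq \max_{\overline D} q - C\lambda^{1/2}$, while the trivial bound $\mu^\lambda < G\omega^\lambda(x) + q(x) \leq \max_{\overline D} q + C\lambda^{1/2}$ at any $x \in \Omega^\lambda$ completes the two-sided control of the Lagrange multiplier.

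The concentration statement then follows exactly as in Lemma \ref{3-200}: every $x \in \Omega^\lambda$ satisfies $q(x) \geq \mu^\lambda - G\omega^\lambda(x) \geq \max_{\overline D} q - C\lambda^{1/2}$, so $q$ restricted to $\Omega^\lambda$ tends uniformly to $\max_{\overline D} q$, and a contradiction argument using continuity of $q$ forces $\Omega^\lambda \subset \mathcal{G}_\delta$ for all sufficiently small $\lambda$. The inclusion $\mathcal{G} \subset \partial D$ when $q$ is nonconstant is immediate from the strong maximum principle, as in Remark 2.3. The only place that requires real care is in choosing the right half ($\varphi^\lambda_+$ rather than $\varphi^\lambda_-$) for the Sobolev embedding step and tracking the sign reversal consistently through the variational inequality; otherwise this is a mirror image of the minimization argument, so no genuinely new obstruction arises.
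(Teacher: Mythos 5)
Your proposal is correct and is essentially the paper's own argument: the authors prove Theorem \ref{1-11-2} precisely by maximizing $E$ over $\mathcal{M}^\lambda$ and mirroring the limiting analysis of Section 3 (they omit the details as ``almost the same''), which is exactly what you carry out. Your sign-reversed variational inequality, the use of $\varphi^\lambda_+$ in the analogue of Lemma \ref{3-15}, the test-function lower bound $E(\omega^\lambda)\geq \lambda\max_{\overline D}q-C\lambda^{3/2}$, and the resulting two-sided bound on $\mu^\lambda$ are the correct mirror images of Lemmas \ref{3-10}--\ref{3-200}, so no new issues arise.
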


\begin{theorem}\label{1-15-2}
Let $q\in C^2(D)\cap C^1(\overline{D})$ be a harmonic function, $k,l$ be two nonnegative integers and $\kappa_1,\cdot\cdot\cdot,\kappa_{k+l}$ be $k+l$ positive real numbers. Suppose that $\{x_1,x_2,\cdot\cdot\cdot,x_k\}\subset\partial D$ are $k$ different strict local minimum points of $q$ on $\overline{D}$, and $\{x_{k+1},x_{k+2},\cdot\cdot\cdot,x_{k+l}\}\subset\partial D$ are $l$ different strict local maximum points of $q$ on $\overline{D}$.
Then there exists a $\lambda_0>0$, such that for any $0<\lambda<\lambda_0$, there exists a weak solution of \eqref{1-7} $w^\lambda$ having the form
\begin{equation}\label{1-16-2}
  w^\lambda=-\sum_{i=1}^k\kappa_iI_{U_i^\lambda}+\sum_{j=k+1}^{k+l}\kappa_jI_{U_j^\lambda},
\end{equation}
where for each $1\leq i\leq k$
\begin{equation}\label{5-4-2}
U_i^\lambda =\{x\in D\mid Gw^\lambda(x)+q(x)<\nu_i^\lambda\}\cap{B_{\delta_0}(x_i)},\,\,|U_i^\lambda|=\lambda,
\end{equation}
and for each $k+1\leq j\leq k+l$
\begin{equation}\label{5-5-2}
U_j^\lambda =\{x\in D\mid Gw^\lambda(x)+q(x)>\nu_j^\lambda\}\cap{B_{\delta_0}(x_j)},\,\,|U_j^\lambda|=\lambda,
\end{equation}
where $\nu_p^\lambda\in\mathbb{R},$ for $1\leq p\leq k+l$, is the Lagrange multiplier depending on $\lambda$.
Here $\delta_0$ is chosen to be sufficiently small such that $x_i$ is the unique minimum point of $q$ on $\overline{B_{\delta_0}(x_i)\cap D}$ for $i=1,\cdot\cdot\cdot,k$, $x_i$ is the unique maximum point of $q$ on $\overline{B_{\delta_0}(x_j)\cap D}$ for $j=k+1,\cdot\cdot\cdot,k+l$, and $\overline{B_{\delta_0}(x_{p_1})\cap D}\cap\overline{B_{\delta_0}(x_{p_2})\cap D}=\varnothing$ for $1\leq p_1,p_2\leq k+l, p_1\neq p_2.$
Moreover, $U_p^\lambda$ shrinks to $x_p$ for each $1\leq p\leq k+l$ as $\lambda\rightarrow0^+$, or equivalently, for any $\delta>0$, there exists a $\lambda_0>0$, such that for any $\lambda<\lambda_0$, we have
\begin{equation}\label{1-17-2}
U^\lambda_p\subset B_{\delta}(x_p)\cap D.
\end{equation}

\end{theorem}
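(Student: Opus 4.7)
The strategy is to adapt Section 4 with $E$ \emph{maximized} rather than minimized over the sign-reversed admissible class
$$\mathcal{N}_*^\lambda := \Bigl\{\omega = \sum_{p=1}^{k+l}\omega_p \in L^\infty(D) : \operatorname{supp}(\omega_p) \subset B_{\delta_0}(x_p),\ \int\omega_i = -\lambda,\ -\kappa_i \le \omega_i \le 0\ (i\le k),\ \int\omega_j = \lambda,\ 0 \le \omega_j \le \kappa_j\ (j\ge k+1)\Bigr\},$$
matching the sign pattern of \eqref{1-16-2}. Weak-$*$ sequential compactness of $\mathcal{N}_*^\lambda$ and weak-$*$ continuity of $E$ on $L^\infty(D)$ carry over verbatim from Section 4. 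Boundedness of $E$ from above on $\mathcal{N}_*^\lambda$ follows from the $L^p$-estimate $|G\omega|_\infty \le C\lambda^{1/2}$ (which makes the quadratic part $O(\lambda^{3/2})$) and from the strict extremality of $x_p$ on $\overline{B_{\delta_0}(x_p)\cap D}$, which yields $\int_D q\omega\,dx \le \lambda(\sum_{j>k}q(x_j)-\sum_{i\le k}q(x_i))$. A maximizer $w^\lambda\in\mathcal{N}_*^\lambda$ therefore exists.

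To derive the bang-bang form \eqref{5-4-2}--\eqref{5-5-2}, I would perturb by $w_s = w^\lambda + s(z_0 - z_1)$, $s>0$ small, with $z_0,z_1$ supported in a single $B_{\delta_0}(x_p)$ and $\int z_0 = \int z_1$. For $1\le p \le k$ the perturbations are taken nonnegative and cut off so that $w_s \in \mathcal{N}_*^\lambda$; maximality now gives the \emph{reverse} inequality $dE/ds|_{s=0^+}\le 0$ and hence
$$\sup_{\{w^\lambda<0\}\cap B_{\delta_0}(x_p)}(Gw^\lambda+q) \le \inf_{\{w^\lambda>-\kappa_p\}\cap B_{\delta_0}(x_p)}(Gw^\lambda+q),$$
which by continuity of $Gw^\lambda+q$ and connectedness of $B_{\delta_0}(x_p)\cap D$ becomes an equality, defining $\nu_p^\lambda$ and forcing $w_p^\lambda = -\kappa_p I_{\{Gw^\lambda+q<\nu_p^\lambda\}\cap B_{\delta_0}(x_p)}$. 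For $k+1\le p\le k+l$ the dual argument with $z_0,z_1\le 0$ yields the superlevel-set representation.

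The concentration step follows the template of Lemmas \ref{5-101}--\ref{3-500}. Using the interior sphere condition at each $x_p\in\partial D$ I would place disks $B_{\varepsilon_p}(x_p^\lambda)\subset B_{\delta_0}(x_p)$ with $\kappa_p\pi\varepsilon_p^2=\lambda$ and construct the test function $v^\lambda = -\sum_{i\le k}\kappa_i I_{B_{\varepsilon_i}(x_i^\lambda)} + \sum_{j>k}\kappa_j I_{B_{\varepsilon_j}(x_j^\lambda)} \in \mathcal{N}_*^\lambda$; evaluating $E(v^\lambda)$ produces the lower bound $E(w^\lambda)\ge\lambda(\sum_{j>k}q(x_j)-\sum_{i\le k}q(x_i))-C\lambda^{3/2}$. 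A H\"older--Sobolev argument parallel to Lemma \ref{5-106} (with reversed inequality) yields $\int_D(Gw^\lambda+q-\nu_p^\lambda)w_p^\lambda\,dx \le C\lambda^{3/2}$, and combining these with the decomposition
$$E(w^\lambda) = -\tfrac12\int_D Gw^\lambda\,w^\lambda\,dx + \sum_{p=1}^{k+l}\int_D(Gw^\lambda+q-\nu_p^\lambda)w_p^\lambda\,dx + \lambda\Bigl(\sum_{j>k}\nu_j^\lambda - \sum_{i\le k}\nu_i^\lambda\Bigr)$$
and pointwise evaluation of $Gw^\lambda+q$ at an arbitrary point of $U_p^\lambda$ gives the two-sided estimate $\nu_p^\lambda = q(x_p)+O(\lambda^{1/2})$. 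Hence $\sup_{U_p^\lambda}|q(x)-q(x_p)|\to 0$, and strict extremality of $x_p$ on $\overline{B_{\delta_0}(x_p)\cap D}$ forces $U_p^\lambda\subset B_\delta(x_p)$, giving \eqref{1-17-2}. The weak form \eqref{1-10} then follows exactly as at the end of Section 4: for $\xi\in C_c^\infty(D)$ the area-preserving flow $\Phi_t$ of $\nabla^\perp\xi$ keeps $w^\lambda\circ\Phi_t$ in $\mathcal{N}_*^\lambda$ for $|t|$ small, and differentiating $E(w^\lambda\circ\Phi_t)$ at $t=0$ produces the identity.

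The main obstacle is the sign bookkeeping running through the proof: the admissible perturbations, the direction of the first-order optimality inequality, the level-set containment, the Sobolev--H\"older estimate, and the test function for the energy lower bound each flip sign between the indices $i\le k$ and $j\ge k+1$. Once this sign accounting is done consistently, every ingredient of Section 4 transfers with purely mechanical modifications.
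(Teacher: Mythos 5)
Your proposal is correct and is essentially the argument the paper intends but omits: maximize $E$ over the sign-adjusted admissible class and rerun the Section 4 machinery (bang-bang first-order condition, test-function energy bound, the H\"older--Sobolev estimate, the Lagrange-multiplier asymptotics $\nu_p^\lambda=q(x_p)+O(\lambda^{1/2})$, and the flow-map criticality) with all inequalities reversed. Your explicit sign reversal of the class (nonpositive parts near the minima $x_i$, nonnegative parts near the maxima $x_j$) is the right reading of the paper's one-line remark about ``maximization of $E$ on $\mathcal{N}^\lambda$'', since maximizing over the class exactly as written in \eqref{5-1} would yield the wrong sign pattern and no concentration at the points $x_p$.
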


Proofs for Theorem \ref{1-11-2} and Theorem \ref{1-15-2} are similar to those for Theorem \ref{1-11} and Theorem \ref{1-15}. More specifically, we consider the maximization of $E$ on $\mathcal{M}^\lambda$ and $\mathcal{N}^\lambda$ respectively and then analyze the limiting behavior of the maximizers as $\lambda\rightarrow0^+$. Since the details are almost the same, we omit it here.

~\\
\noindent{\bf Acknowledgments:}
 Daomin Cao was partially supported by Hua Luo Geng Center of Mathematics,
AMSS, CAS, he was also partially supported by NNSF of China grant No.11331010 and No.11771469.
Guodong Wang was supported by NNSF of China grant No.11771469.

\end{document}